\documentclass[a4paper]{article}

\usepackage[english]{babel}
\usepackage[utf8]{inputenc}
\usepackage[T1]{fontenc}
\usepackage{amsmath,amsfonts,amsthm,amssymb}
\usepackage[small,bf]{caption}
\usepackage{pgf}
\usepackage{subfig}

\usepackage[
pdftitle={Bounds on the radius and status of graphs},
pdfsubject={graph theory},
pdfauthor={Roswitha Rissner, Rainer E. Burkard},
pdfkeywords={graph theory, status, radius, midpoints, centre, centroid, median},
bookmarksopen=true,
bookmarksnumbered=true,
pdfpagemode=UseOutlines,
pdfdisplaydoctitle=true
]{hyperref}

\hypersetup{colorlinks=true, linkcolor=darkgray, citecolor=darkgray, urlcolor=darkgray}

\newcommand{\dist}{d}
\newcommand{\distT}{d_T}
\DeclareMathOperator{\rad}{rad}
\DeclareMathOperator{\ecc}{ecc}
\newcommand{\s}{s}
\DeclareMathOperator{\W}{\mathbf{w}}

\newcommand{\T}{\overline{T}}
\newcommand{\pic}{pgfpicture}

\theoremstyle{plain}
\newtheorem{The}{Theorem}[section]
\newtheorem{Lem}[The]{Lemma}
\newtheorem{Pro}[The]{Proposition}

\theoremstyle{definition}

\newtheorem*{Exa}{Example}

\newtheorem*{Beh}{Claim}

\numberwithin{equation}{section}
\begin{document}
\title{ Bounds on the radius and status of graphs}
\author{
Roswitha Rissner\footnotemark[2]
\and
Rainer E.\ Burkard\footnotemark[1]
}

\def\thefootnote{\fnsymbol{footnote}}
\footnotetext[1]{Graz University of Technology, Institute of Optimization and Discrete Mathematics, Steyrergasse 30, Graz,
Austria. \texttt{burkard@tugraz.at} }
\footnotetext[2]{Graz University of Technology, Institute of Analysis and Computational Number Theory, Steyrergasse 30, Graz,
Austria. \texttt{roswitha.rissner@tugraz.at} }

\footnotetext{This research is supported by the Austrian Science Fund (FWF): W1230, Doctoral Program ``Discrete Mathematics'' }

\date{November, 2013}

\maketitle

\begin{abstract}

Two classical concepts of centrality in a graph are the median and the center. 
The connected notions of the  status and the radius of a graph 
seem to be in no  relation.
In this paper, however,  we show a clear connection of both concepts, as they obtain their 
minimum and maximum values at the same type of tree graphs. 
Trees with fixed maximum degree and extremum radius and
status, resp., are characterized. The bounds on radius and status can be transferred to general connected graphs via spanning trees.

A new method of proof allows not 
only to regain results of Lin et al.~on graphs with
extremum status, but it allows also to prove analogous results on
graphs with extremum radius. 

\vskip4ex
\textbf{Keywords:}  status, radius, midpoints, center, centroid, median
\end{abstract}

\section{Introduction}

The status and radius of a graph are fundamental notions in graph theory. 
This paper presents upper and lower bounds for both with respect to the order and the maximum degree of the graph. 
We will characterize trees with fixed order and fixed
maximum degree and  minimum (maximum) radius and status. 
It turns out that such trees have the same structure for the radius and the
status although the positions of the center and the centroid do not coincide in
general. Therefore the results demonstrate that despite several different 
 properties of center and centroid these two notions correlate in a subtle way.

Before we give an overview on the structure of this work, we want to 
clarify the notation and recall the definitions used below.
Throughout this paper a graph $G = (V,E)$ is undirected, 
connected and simple (no loops, no multiple edges). The \textit{order}  
of $G$ is the cardinality $|V|$ of its vertex set $V$ and 
the \textit{maximum degree} $\Delta(G)$ of $G$ is defined as 
$\max_{x\in V} \deg(x)$, the largest degree of a vertex of $G$.
The \textit{distance} $\dist(u,v)$ between
two vertices $u$ and $v$ is defined as 
the number of edges of a 
shortest path from $u$ to $v$.

 The \textit{status} $s(x)$ of a vertex $x$ is
defined as the sum of distances to all other vertices of the
graph, that is,
\begin{align}\label{statusofx}
 \s(x) = \sum_{y \in V} \dist(x,y).
\end{align}
The status $s(G)$ of a graph $G$ is the minimum status of a vertex
of $G$, that is,
\begin{align*}
 \s(G) = \min_{x \in V} \s(x).
\end{align*}
A vertex of minimum status is called \textit{median}. This notion is strongly 
related to the notion of a \textit{centroid vertex}, that is, a vertex (of a tree) of minimum 
branch weight. We will recall the exact definition in Section \ref{sec:transformation}.
In fact, Zelinka \cite{Zelinka68} showed that a vertex of a tree is a median if and
only if it is a centroid vertex (in his work a centroid vertex  is called mass center). Therefore we will (in case the underlying graph is a tree) refer to a median also
as a \textit{centroid} vertex. The set of centroid vertices of a tree is called \textit{centroid} 
of the tree.

Replacing the sum in \eqref{statusofx} by the maximum 
leads us to the \textit{eccentricity} of a vertex and the 
\textit{radius} of a graph.
Thus the eccentricity $\ecc(x)$ of a vertex $x$ is the maximum
distance to a vertex in $G$, that is,
\begin{align*}
 \ecc(x) = \max_{y \in V} \dist(x,y).
\end{align*}
The radius $\rad(G)$ of  graph $G$ is the minimum eccentricity of a
vertex of $G$, that is,
\begin{align*}
 \rad(G) = \min_{x \in V} \ecc(x).
\end{align*}

A vertex of minimum eccentricity is called a \textit{central vertex}, the set of central vertices is called the \textit{center} of the graph.
Further research on these terms and generalizations can be found  e.g. in the work of Hakimi (\cite{Hakimi64},\cite{Hakimi65}), Kariv and Hakimi (\cite{KarivHakimi79},\cite{KarivHakimi79_1}), Jeger and Kariv \cite{JegerKariv85}, Tansel, Francis and Lowe \cite{TanFranLow83}, 
Lin and Shang (\cite{LinShang09}) and Lin et al.~(\cite{LinShangZhang11}).

Before we state results for general graphs in the Section \ref{sec:generalgraphs} 
we will concentrate on trees. Sections \ref{sec:lower} and \ref{sec:upper} 
are dedicated to details on lower and upper bounds for the  
radius and status of trees. 
These results will, together with a lemma, imply results 
on general graphs in the Section \ref{sec:generalgraphs}. 
This is possible due to the observation that a connected graph always contains 
a spanning tree which allows us to reduce the argumentation to trees. 
In Section \ref{sec:concl} we conclude our work and give a small outlook on a 
possible generalization of this theory in case of a weighted graph.

We want to stress, that the results on the status of a graph were already published 
by Lin et al.~\cite{LinShangZhang11}. The authors used 
chains of inequalities to show their statements. 
Further the value of the upper bound of the radius was already shown by Vizing (\cite[Lemma 1]{Viz67}).
We introduce in this paper a new proof technique, 
based on tree-transformations which not only points out the similar
behavior of radius and status in dependence of order and maximum
degree of the input tree, but also allows to regain the results of
Lin et al.~in an elegant way.

This transformation will be introduced in Section \ref{sec:transformation} and,
in particular, the location of the centroid of the transformed
tree is discussed.

This paper emerged from a thesis by Rissner
\cite{Rissner11} at Graz University of Technology.

\section{Transformation}
\label{sec:transformation}

In this section we exhibit a transformation for trees: if a tree
with given order and maximum degree does not have an optimum (minimum or
maximum, resp.) status or radius then we can apply a simple
transformation to obtain a tree of the same order and the same
maximum degree, but with a better (lower or larger) value for status or radius,
respectively.

The transformation consists of the reallocation of a single leaf, 
that is, a vertex of degree 1. Clearly this transformation can change the value 
of the radius at most by 1 and it is rather easy to determine if it is increasing or decreasing 
depending on the choice of the leaf. 
However, the change rate of the status is not that easily determined. To compute the status it is 
useful to know a centroid vertex of the tree. In this section we investigate the position of the centroid 
of the transformed tree with respect to a centroid vertex of the original tree. Given a centroid vertex 
$x$ of the original tree, it turns out that if $x$ is not a centroid vertex of the transformed tree, then a 
neighbor of $x$ is, and we can determine which neighbor in dependence of the location of the reallocated leaf.

In this context the classical definition of the centroid turned out to be useful, 
hence we will recall it at this point. Let $T$ be a tree and 
$x$ a vertex, a \textit{branch}
$T'$ at $x$ is a maximal subtree of $T$ which contains $x$ as a
leaf. The \textit{weight} of a branch  is defined as $|T'| - 1$,
that is, the number of vertices in this branch excluding the vertex
$x$. The \textit{branch weight} $\W_T(x)$ of $x$ is the maximum weight of a 
branch at $x$ and a centroid vertex is a vertex of minimum branch weight. 

This section provides two propositions which describe the location of 
a centroid vertex of the transformed tree with respect to the position 
of a centroid vertex of the original tree.
We start with the following well-known lemma.

\begin{Lem}\label{centroid}
 Let $T$ be a tree of order $n$ and $v$ a vertex of $T$. Then $v$ is a centroid vertex of $T$ if and only if its branch weight $\W_T(v)$ is less than or equal to $\frac{n}{2}$.
\end{Lem}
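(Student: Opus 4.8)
The plan is to reduce the whole statement to a single comparison of branch weights across one edge, and then to telescope this comparison along a path. For an edge $vw$ of $T$, deleting it splits $T$ into two subtrees; write $n_v$ and $n_w$ for their orders, so $n_v+n_w=n$. The branch at $v$ running through $w$ has weight exactly $n_w$, while every \emph{other} branch at $v$ is contained in $v$'s side and hence has weight at most $n_v-1$; the symmetric statement holds at $w$. Thus $\W_T(v)=\max(n_w,M_v)$ with $M_v\le n_v-1$, and $\W_T(w)=\max(n_v,M_w)$ with $M_w\le n_w-1$.

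The main step is a monotonicity property across this edge: if $n_v>n_w$ then $\W_T(v)<\W_T(w)$, while if $n_v=n_w=\tfrac{n}{2}$ then $\W_T(v)=\W_T(w)=\tfrac{n}{2}$. To see the first case, note $M_w\le n_w-1<n_v$, so the back-branch dominates and $\W_T(w)=n_v$; on the other hand $M_v\le n_v-1$ and $n_w\le n_v-1$, so $\W_T(v)\le n_v-1<n_v=\W_T(w)$. The balanced case is immediate since then $M_v\le\tfrac{n}{2}-1<\tfrac{n}{2}=n_w$ and symmetrically. In words: crossing an edge toward the strictly larger side strictly decreases the branch weight.

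Both directions of the lemma then follow. For the ``only if'' direction I argue by contraposition: if $\W_T(v)>\tfrac{n}{2}$, then since the branch weights at $v$ sum to $n-1<n$ there is a unique heavy branch, say through a neighbor $w$, with $n_w=\W_T(v)>\tfrac{n}{2}$ and hence $n_w>n_v$; the monotonicity step gives $\W_T(w)<\W_T(v)$, so $v$ is not a centroid vertex. For the ``if'' direction, assume $\W_T(v)\le\tfrac{n}{2}$ and let $u$ be arbitrary, with path $v=x_0,x_1,\dots,x_m=u$. As $i$ increases, the component containing $x_0,\dots,x_i$ grows strictly and the far component (containing $x_{i+1},\dots,u$) shrinks strictly; the hypothesis $\W_T(v)\le\tfrac{n}{2}$ says the far side already has order at most $\tfrac{n}{2}$ at the first edge, so the near side is at least as large at every edge of the path. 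Applying the monotonicity step edge by edge yields $\W_T(v)=\W_T(x_0)\le\W_T(x_1)\le\cdots\le\W_T(x_m)=\W_T(u)$, so $v$ minimizes the branch weight and is a centroid vertex.

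I expect the monotonicity step across a single edge to be the only real obstacle: the crux is establishing that the back-branch of weight $n_v$ dominates all other branches at $w$, which rests on the elementary but essential bound that any branch at $w$ not pointing back toward $v$ has weight at most $n_w-1$. Once this is in place, the two implications and the telescoping argument are routine; the only point demanding care is tracking strict versus non-strict inequalities in the balanced case $n_v=n_w=\tfrac{n}{2}$, which is exactly where a tree can possess two adjacent centroid vertices.
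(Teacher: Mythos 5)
Your proof is correct. Note that the paper itself offers no proof to compare against: Lemma~\ref{centroid} is introduced there as a ``well-known lemma'' and simply stated, so your argument fills a gap rather than paralleling or diverging from an existing one. What you give is a complete, self-contained version of the classical Jordan-type argument. The decomposition across a single edge $vw$ is set up correctly: the back-branch at $v$ through $w$ has weight exactly $n_w$, all other branches at $v$ have weight at most $n_v-1$, and symmetrically at $w$; this yields both the strict monotonicity $\W_T(v)<\W_T(w)$ when $n_v>n_w$ and the equality $\W_T(v)=\W_T(w)=\frac{n}{2}$ in the balanced case. The ``only if'' direction via contraposition is sound (the branch weights at $v$ sum to $n-1$, so a branch of weight exceeding $\frac{n}{2}$ is unique and crossing into it strictly decreases the branch weight), and the ``if'' direction's telescoping is also sound: the near side of each successive edge on the path from $v$ to $u$ grows strictly, so once it has order at least $\frac{n}{2}$ at the first edge it stays at least as large as the far side, and each edge contributes a non-strict inequality, with the case of two sides of order exactly $\frac{n}{2}$ handled separately as it must be. As a small bonus, your edge-monotonicity step proves slightly more than the lemma requires: it shows the branch-weight function strictly decreases toward the heavier side of any edge, which immediately gives the standard corollary that the centroid of a tree consists of a single vertex or of two adjacent vertices.
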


An immediate consequence of this lemma is the following inequality.
Let $T'$ be a branch at a centroid vertex $x$, then 
\begin{align}\label{eq:centroidInequs}
  |T'| \leq |T \setminus T'|  +1.
\end{align}

\begin{Pro}\label{transformationCentroid1}
Let $T = (V,E)$ be a tree and let $x$ be a centroid vertex, $b$ a
leaf, $\bar b$ the vertex adjacent to $b$ and $u \neq x$ an
arbitrary vertex of $T$. Further let $T_1$ be the branch at $x$
containing $u$, $T_2$ the branch containing $b$ and  $S = (T
\setminus (T_1 \cup T_2)) \cup \{x\}$ the union of all other
branches at $x$.

For $\overline{T} = (V , (E \setminus \{(\bar b,b)\}) \cup
\{(u,b)\})$, the tree resulting from $T$ by removing the edge
$(\bar b,b)$ and inserting the edge $(u,b)$ instead, the following
statements hold.

\begin{enumerate}
 \item[(a)] If $T_1 = T_2$ or $|T_1| < |T_2| + |S| - 1$, 
            then $x$ is a centroid vertex of $\overline{T}$.
 \item[(b)] If $T_1 \neq T_2$ and $|T_1| \geq |T_2| + |S| - 1$, 
            the vertex adjacent to $x$ on the shortest path (geodesic) 
            from $x$ to $u$ is a centroid vertex of $\overline{T}$.
\end{enumerate}

\end{Pro}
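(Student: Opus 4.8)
The plan is to use Lemma~\ref{centroid} throughout: a vertex is a centroid of a tree on $n$ vertices precisely when each of its branch weights is at most $\frac{n}{2}$. Since the transformation neither creates nor deletes vertices, $\overline{T}$ has the same order $n$ as $T$, and the branches at $x$ partition $V\setminus\{x\}$, so that $(|T_1|-1)+(|T_2|-1)+(|S|-1)=n-1$, i.e. $|T_1|+|T_2|+|S|=n+2$. I would keep this identity at hand to turn every size comparison into a comparison with $\frac{n}{2}$.

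First I would dispose of the easy case $T_1=T_2$ in (a): then $u$ and $b$ lie in the same branch at $x$, and reattaching $b$ from $\bar b$ to $u$ keeps $b$ inside that branch. Hence every branch at $x$ has exactly the same vertex set in $\overline{T}$ as in $T$, so $\W_{\overline{T}}(x)=\W_T(x)\le\frac{n}{2}$ and $x$ stays a centroid. Next I would treat $T_1\neq T_2$. Here moving $b$ from $T_2$ to $T_1$ changes only two branches at $x$: the branch through $u$ grows to weight $|T_1|$, while the former branch $T_2$ shrinks to weight $|T_2|-2$, and every branch inside $S$ is untouched. Since the shrunken and the untouched branches already had weight $\le\frac{n}{2}$ in $T$, the only branch that can violate the centroid condition at $x$ is the one through $u$. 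By the identity above, $|T_1|\le\frac{n}{2}$ is equivalent to $|T_1|\le|T_2|+|S|-2$, that is, to $|T_1|<|T_2|+|S|-1$. This is exactly the hypothesis of (a), so in that case $x$ remains a centroid; its negation $|T_1|\ge|T_2|+|S|-1$, the hypothesis of (b), forces $\W_{\overline{T}}(x)=|T_1|>\frac{n}{2}$, so $x$ is no longer a centroid.

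It remains to prove (b), that the neighbour $w$ of $x$ on the geodesic to $u$ is a centroid of $\overline{T}$; this is where the real work lies. I would compute the branches at $w$ in $\overline{T}$. Exactly one of them points back towards $x$: since $w$ is the unique neighbour of $x$ inside $T_1$, removing $w$ leaves $x$ joined only to the vertices of $S$ and of $T_2\setminus\{b\}$, so this branch has weight $|T_2|+|S|-2$, which is $\le\frac{n}{2}$ precisely because $|T_1|\ge|T_2|+|S|-1\ge|T_2|+|S|-2$. All remaining branches at $w$ point forward into $(T_1\setminus\{x\})\cup\{b\}$; their weights are nonnegative and sum to $|T_1|-1$, so each is at most $|T_1|-1$.

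The crucial point, and the main obstacle, is to bound these forward branches by $\frac{n}{2}$, since a single heavy forward branch could a priori exceed $\frac{n}{2}$. Here I would invoke the hypothesis that $x$ is a centroid of the \emph{original} tree $T$, which gives $|T_1|-1\le\frac{n}{2}$, equivalently $|T_1|\le|T_2|+|S|$; hence every forward branch has weight $\le|T_1|-1\le\frac{n}{2}$. Together with the backward branch this yields $\W_{\overline{T}}(w)\le\frac{n}{2}$, and Lemma~\ref{centroid} completes the proof. I would finally check that the degenerate configurations ($u=w$, so that $b$ becomes a leaf hanging at $w$, and $\bar b=x$, so that $T_2=\{x,b\}$ and $|T_2|-2=0$) are covered by the same counts.
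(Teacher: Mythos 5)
Your proof is correct, and while it leans on the same tool as the paper---Lemma~\ref{centroid}---it is organized along a genuinely different, and more direct, line. The paper's proof, after the easy case $T_1=T_2$, works by \emph{comparing} branch weights: it checks whether $\W_{\overline{T}}(x)\leq\W_T(x)$, and when the weight increases (so $\W_{\overline{T}}(x)=|T_1|$) it first proves an intermediate claim that $\overline{T}_1=T_1\cup\{b\}$ must contain a centroid vertex of $\overline{T}$, by showing that every neighbour $z$ of $x$ lying in $\overline{T}_2$ or in $S$ has branch weight at least $\W_{\overline{T}}(x)$; it then compares $x$ with its unique neighbour $y$ in $\overline{T}_1$ (your $w$), concluding in the case $|T_1|<|S|+|T_2|-1$ that $x$ stays a centroid since $\W_{\overline{T}}(y)\geq\W_{\overline{T}}(x)$, and in the case $|T_1|\geq|S|+|T_2|-1$ that $y$ is a centroid since $\W_{\overline{T}}(y)\leq|T_1|-1$ while $\W_{\overline{T}}(x)\leq \frac{n}{2}+1$. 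You replace the localization claim and all neighbour comparisons by the counting identity $|T_1|+|T_2|+|S|=n+2$, which converts each hypothesis of the proposition into exactly the $\frac{n}{2}$-criterion of Lemma~\ref{centroid}: in (a) the only branch at $x$ that grows is the one through $u$, of new weight $|T_1|$, and $|T_1|\leq\frac{n}{2}$ is equivalent to $|T_1|<|T_2|+|S|-1$; in (b) the branch at $w$ pointing back towards $x$ has weight $n-|T_1|<\frac{n}{2}$, and every forward branch has weight at most $|T_1|-1\leq\frac{n}{2}$, the last inequality holding because $T_1$ was a branch at the centroid vertex $x$ of the \emph{original} tree. That single observation---bounding all forward branches at $w$ simultaneously by the old weight of $T_1$---is your key step and has no counterpart in the paper's argument. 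What your route buys: the paper's comparison of $x$ with only its neighbour $y$ tacitly relies on branch weights growing as one moves deeper into $\overline{T}_1$ (otherwise minimality of $\W_{\overline{T}}(x)$ over all vertices would not follow), whereas your verification of the $\frac{n}{2}$-threshold at a single candidate vertex is self-contained; the cost is negligible, as the identity and the branch bookkeeping are immediate.
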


\begin{proof}
In case $T_1 = T_2$, the branches  of $\overline{T}$ at $x$ have the same 
weights as the branches of $T$. Due to Lemma \ref{centroid}, $x$ is a centroid vertex of $\overline{T}$.

Henceforth let us consider the case $T_1 \neq T_2$. Depending on
the degree of $x$, the subtree $S$ contains no, exactly one or
more than one branch at $x$. Let $S'\subset S$ be a branch at $x$
of maximum weight in $S$. In case there is no branch inside $S$, let
$S'= S = \{x\}$ be the subtree consisting of the single vertex $x$.
Let further $\overline{T}_1 = T_1 \cup \{b\}$ and $\overline{T}_2
= T_2 \setminus \{b\}$ be the branches of $\overline{T}$
corresponding to the branches $T_1$ and $T_2$, respectively.

The branch weight of $x$ in $\overline{T}$ is equal to
\begin{align*}
 \W_{\overline{T}}(x) = \max\{|\overline{T}_1|,|\overline{T}_2|,|S'|\} - 1.
\end{align*}

If $\W_{\overline{T}}(x)\leq \W_T(x)$ then $x$ is a centroid vertex of $\overline{T}$ due to 
Lemma \ref{centroid}. So, we need to check what happens in case the branch weight of $x$ increases. This is the case 
if $T_1$ is a branch of $x$ of maximum weight (in $T$ and thus in $\overline{T}$), that is, 
\begin{align*}
 |T_1| \geq \max\{|T_2|, |S'|\}
\end{align*}
and we have 
\begin{align}\label{eq:weight}
 \W_{\overline{T}}(x) = |\overline{T}_1|- 1 = |T_1|.
\end{align}

After these observations it is quite easy to prove the first claim.
\begin{Beh}
The subtree $\overline{T}_1$ contains at least one centroid vertex of $\overline{T}$.
\end{Beh}

\begin{figure}[!h]
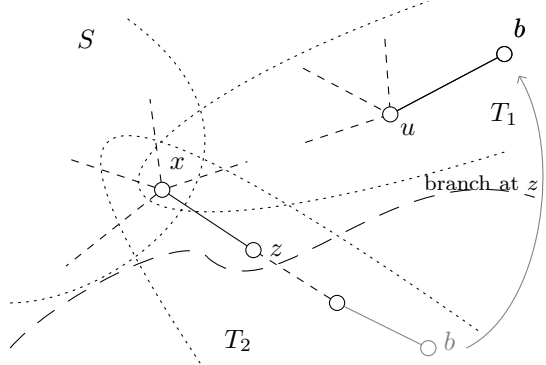

  \begin{center}
    \begin{\pic}{0.2cm}{0.2cm}{7.5cm}{5cm}
      \begin{pgfmagnify}{1}{1}  
        \pgfnodecircle{Node1}[stroke]{\pgfxy(2.5,2.5)}{0.1cm}
        \pgfnodecircle{Node2}[stroke]{\pgfxy(5.5,3.5)}{0.1cm}
        \pgfnodecircle{Node4}[virtual]{\pgfxy(2.3,4)}{0.3cm}
        \pgfnodecircle{Node5}[virtual]{\pgfxy(1,3)}{0.3cm}
        \pgfnodecircle{Node6}[virtual]{\pgfxy(1,1.3)}{0.3cm}
        \pgfnodecircle{Node7}[stroke]{\pgfxy(4.8,1)}{0.1cm}
        \pgfnodecircle{Node9}[stroke]{\pgfxy(3.7,1.7)}{0.1cm}
        \pgfnodecircle{Node10}[virtual]{\pgfxy(4,3)}{0.4cm}
        \pgfnodecircle{Node11}[virtual]{\pgfxy(5.4,4.9)}{0.4cm}
        \pgfnodecircle{Node12}[virtual]{\pgfxy(4,4.3)}{0.4cm}
        \pgfputat{\pgfnodeborder{Node1}{60}{0.3cm}}{\pgfbox[center,center]{$x$}}
\pgfputat{\pgfnodeborder{Node2}{320}{0.2cm}}{\pgfbox[center,center]{$u$}}
        \pgfputat{\pgfnodeborder{Node9}{0}{0.2cm}}{\pgfbox[center,center]{$z$}}
        \textcolor{gray}{
        \pgfnodecircle{Node3}[stroke]{\pgfxy(6,0.4)}{0.1cm}
        \pgfputat{\pgfnodeborder{Node3}{20}{0.2cm}}{\pgfbox[center,center]{$b$}}
        \pgfnodeconnline{Node7}{Node3}
        \pgfsetendarrow{\pgfarrowpointed}
        \pgfxycurve(6.5,0.4)(7.5,0.9)(7.8,3)(7.2,4)
        }
        \pgfsetdash{{1pt}{0pt}}{0pt}
      \textcolor{black}{
        \pgfnodecircle{Node8}[stroke]{\pgfxy(7,4.3)}{0.1cm}
        \pgfputat{\pgfnodeborder{Node8}{60}{0.3cm}}{\pgfbox[center,center]{$b$}}
        \pgfnodeconnline{Node2}{Node8}
        \pgfnodecircle{Node8}[stroke]{\pgfxy(7,4.3)}{0.1cm}
        \pgfputat{\pgfnodeborder{Node8}{60}{0.3cm}}{\pgfbox[center,center]{$b$}}
        \pgfnodeconnline{Node2}{Node8}
      }
        \pgfnodeconnline{Node1}{Node9}
      \pgfsetdash{{3pt}{3pt}}{0pt}
      \pgfnodeconnline{Node1}{Node4}
      \pgfnodeconnline{Node1}{Node5}
      \pgfnodeconnline{Node1}{Node6}
      \pgfnodeconnline{Node1}{Node10}
      \pgfnodeconnline{Node7}{Node9}
      \pgfnodeconnline{Node10}{Node2}
      \pgfnodeconnline{Node11}{Node2}
      \pgfnodeconnline{Node12}{Node2}
      \pgfsetdash{{3pt}{0pt}}{0pt}
      \pgfsetdash{{1pt}{2pt}}{0pt}
      \pgfxycurve(7,3)(0.5,1)(1,3.2)(6,5)
      \pgfputat{\pgfxy(7,3.5)}{\pgfbox[center,center]{$T_1$}}
      \pgfxycurve(3,0.2)(0.5,4)(1.5,4.3)(6.7,0.6)
      \pgfputat{\pgfxy(3.5,0.5)}{\pgfbox[center,center]{$T_2$}}
      \pgfxycurve(0.5,1)(2,1)(4.5 ,3)(2,4.8)
      \pgfputat{\pgfxy(1.5,4.5)}{\pgfbox[center,center]{$S$}}

      \pgfsetdash{{8pt}{6pt}}{0pt}
    \textcolor{black}{
      \pgfxycurve(0.5,0.4)(1,1)(2.8 ,2)(3.1,1.6)
      \pgfxycurve(3.1,1.6)(4,0.8)(5.5 ,3)(7.4,2.4)
      \pgfputat{\pgfxy(6.7,2.6)}{\pgfbox[center,center]{\footnotesize branch at
      $z$}}
      }
  \end{pgfmagnify}
 \end{\pic}
\caption{Neighbor $z$ of $x$ in $T_2$.}
\label{fig:notInT2}
 \end{center}
\end{figure}

Let $z$ be a neighbor of $x$ in $\overline{T}_2$ (see Fig.~\ref{fig:notInT2}).
The subtree $\overline{T}_1 \cup S \cup \{z\}$
with weight $|\overline{T}_1| + |S| -1 = |T_1| + |S|$ is a branch at $z$ of maximum weight, hence we get
\begin{align*}
 \W_{\overline{T}} = |T_1| + |S| > |T_1| = \W_T(x).
\end{align*}

Now, assume that $z$ is a neighbor of $x$ in $S$ and $\bar S$ is the branch at $x$ containing $z$ (see Fig.~\ref{fig:notInS}). A branch of maximum weight at $z$ in $\overline{T}$ is 
$\overline{T}_1 \cup \overline{T}_2 \cup (S \setminus \bar S)\cup \{z\}$, its weight amounts to $|\overline{T}_1| + |\overline{T}_2|+ |S| - |\bar S| -1$. 
Again, we get 
\begin{align*}
 \W_{\overline{T}} = |\overline{T}_1| + |\overline{T}_2|+ |S| - |\bar S| -1 \geq |T_1| = \W_T(x).
\end{align*}

\begin{figure}[!ht]
  \begin{center}
    \begin{\pic}{0.2cm}{0.1cm}{7.6cm}{5.7cm}
       \begin{pgfmagnify}{1}{1}
      \pgfnodecircle{Node1}[stroke]{\pgfxy(2.5,2.5)}{0.1cm}
      \pgfnodecircle{Node2}[stroke]{\pgfxy(5.5,3.5)}{0.1cm}
      \pgfnodecircle{Node4}[virtual]{\pgfxy(2.3,4)}{0.3cm}
      \pgfnodecircle{Node5}[stroke]{\pgfxy(1,3)}{0.1cm}
      \pgfnodecircle{Node6}[virtual]{\pgfxy(1,1.3)}{0.3cm}
      \pgfnodecircle{Node7}[stroke]{\pgfxy(4.8,1)}{0.1cm}
      \pgfnodecircle{Node9}[virtual]{\pgfxy(3.7,1.7)}{0.4cm}
      \pgfnodecircle{Node10}[virtual]{\pgfxy(4,3)}{0.4cm}
      \pgfnodecircle{Node11}[virtual]{\pgfxy(5.4,4.9)}{0.4cm}
      \pgfnodecircle{Node12}[virtual]{\pgfxy(4,4.3)}{0.4cm}
      \pgfnodecircle{Node13}[virtual]{\pgfxy(0.5,4.3)}{0.4cm}
      \pgfnodecircle{Node14}[virtual]{\pgfxy(0.2,2)}{0.4cm}

  \pgfputat{\pgfnodeborder{Node1}{60}{0.3cm}}{\pgfbox[center,center]{$x$}}
  \pgfputat{\pgfnodeborder{Node2}{320}{0.2cm}}{\pgfbox[center,center]{$u$}}
  \pgfputat{\pgfnodeborder{Node5}{80}{0.2cm}}{\pgfbox[center,center]{$z$}}

\textcolor{gray}{
\pgfnodecircle{Node3}[stroke]{\pgfxy(6,0.4)}{0.1cm}
\pgfputat{\pgfnodeborder{Node3}{20}{0.2cm}}{\pgfbox[center,center]{$b$}}
\pgfnodeconnline{Node7}{Node3}
\pgfsetendarrow{\pgfarrowpointed}
\pgfxycurve(6.5,0.4)(7.5,0.9)(7.8,3)(7.2,4)
}
\pgfsetdash{{2pt}{0pt}}{0pt}

  \textcolor{black}{
  \pgfnodecircle{Node8}[stroke]{\pgfxy(7,4.3)}{0.1cm}
  \pgfputat{\pgfnodeborder{Node8}{60}{0.3cm}}{\pgfbox[center,center]{$b$}}
  \pgfnodeconnline{Node2}{Node8}
  \pgfnodecircle{Node8}[stroke]{\pgfxy(7,4.3)}{0.1cm}
  \pgfputat{\pgfnodeborder{Node8}{60}{0.3cm}}{\pgfbox[center,center]{$b$}}
  \pgfnodeconnline{Node2}{Node8}
}
  \pgfnodeconnline{Node1}{Node5}

  \pgfsetdash{{3pt}{3pt}}{0pt}
  \pgfnodeconnline{Node1}{Node4}
  \pgfnodeconnline{Node1}{Node10}
  \pgfnodeconnline{Node1}{Node6}
  \pgfnodeconnline{Node1}{Node9}
  \pgfnodeconnline{Node7}{Node9}
  \pgfnodeconnline{Node10}{Node2}
  \pgfnodeconnline{Node11}{Node2}
  \pgfnodeconnline{Node12}{Node2}
  \pgfnodeconnline{Node13}{Node5}
  \pgfnodeconnline{Node14}{Node5}

  \pgfsetdash{{3pt}{0pt}}{0pt}
  \pgfsetdash{{1pt}{2pt}}{0pt}
  \pgfxycurve(7,3)(0.5,1)(1,3.2)(6,5)
  \pgfputat{\pgfxy(7,3.5)}{\pgfbox[center,center]{$T_1$}}
  \pgfxycurve(3,0.2)(0.5,4)(1.5,4.3)(6.7,0.6)
  \pgfputat{\pgfxy(3.5,0.5)}{\pgfbox[center,center]{$T_2$}}
  \pgfxycurve(0.5,1)(2,1)(4.5 ,3)(2,5)
  \pgfputat{\pgfxy(1.5,4.5)}{\pgfbox[center,center]{$S$}}
  \pgfsetdash{{5pt}{1pt}}{0pt}

\textcolor{black}{
  \pgfsetdash{{8pt}{6pt}}{0pt}
  \pgfxycurve(0.5,0.1)(0.3,4.3)(1,4.3)(6,5.5)
  \pgfputat{\pgfxy(5.9,5.1)}{\pgfbox[center,center]{\footnotesize branch at $z$}}
}
\end{pgfmagnify}
\end{\pic}
\caption{Neighbor of $x$ in  $S$}
\label{fig:notInS}

 \end{center}
\end{figure}

\noindent 
The branch weights of all vertices in $S \cup \overline{T_2}$ must therefore be greater than or equal to the branch weight of $x$, that is, either $x$ is a centroid vertex of $\overline T$ or some other vertex of $\overline{T_1}$. In both cases a centroid vertex is located in $\overline{T_1}$.

\medskip

Now let $y$ be the neighbor of $x$ in $\overline{T}_1$ (see
Fig.~\ref{fig:notFarAway}). Obviously one branch at $z$ is
represented by the subtree $S \cup \overline{T}_2 \cup \{y\}$ with
weight $|S| + |\overline{T}_2| - 1$. We distinguish two cases,
$|T_1| < |S| + |T_2| - 1$ and $|T_1| \geq |S| + |T_2| - 1$.
\medskip

\textbf{Case A.}  In case $|T_1| < |S| + |T_2| - 1$, we get 
\begin{align*}
 \W_{\overline{T}}(y) = |S| + |\overline{T}_2| - 1 \geq |T_1| = \W_{\overline{T}}(x)
\end{align*}

\medskip

\textbf{Case B.} If $|T_1| \geq |S| + |T_2| - 1$, we get
\begin{align}
\W_{\overline{T}}(y) \leq |T_1| - 1 <  \W_{\overline{T}}(x),\label{eq:yIsInCentroid}
\end{align}
that is, $x$ is no centroid vertex of $\bar T$.
\begin{figure}[!h]
  \begin{center}
    \begin{\pic}{0.4cm}{0.1cm}{7.5cm}{5cm}
       \begin{pgfmagnify}{1}{1}
      \pgfnodecircle{Node1}[stroke]{\pgfxy(2.5,2.5)}{0.1cm}
      \pgfnodecircle{Node2}[stroke]{\pgfxy(5.5,3.5)}{0.1cm}
      \pgfnodecircle{Node4}[virtual]{\pgfxy(2.3,4)}{0.3cm}
      \pgfnodecircle{Node5}[virtual]{\pgfxy(1,3)}{0.1cm}
      \pgfnodecircle{Node6}[virtual]{\pgfxy(1,1.3)}{0.3cm}
      \pgfnodecircle{Node7}[stroke]{\pgfxy(4.8,1)}{0.1cm}
      \pgfnodecircle{Node9}[virtual]{\pgfxy(3.7,1.7)}{0.4cm}
      \pgfnodecircle{Node10}[stroke]{\pgfxy(4,3)}{0.1cm}
      \pgfnodecircle{Node11}[virtual]{\pgfxy(5.4,4.9)}{0.4cm}
      \pgfnodecircle{Node12}[virtual]{\pgfxy(4,4.3)}{0.4cm}
      \pgfnodecircle{Node13}[stroke]{\pgfxy(5,2.6)}{0.1cm}
      \pgfnodecircle{Node14}[virtual]{\pgfxy(0.2,2)}{0.4cm}

  \pgfputat{\pgfnodeborder{Node1}{60}{0.3cm}}{\pgfbox[center,center]{$x$}}
  \pgfputat{\pgfnodeborder{Node2}{320}{0.2cm}}{\pgfbox[center,center]{$u$}}
  \pgfputat{\pgfnodeborder{Node10}{80}{0.2cm}}{\pgfbox[center,center]{$y$}}
  \pgfputat{\pgfnodeborder{Node13}{80}{0.2cm}}{\pgfbox[center,center]{$z$}}

\textcolor{gray}{
\pgfnodecircle{Node3}[stroke]{\pgfxy(6,0.4)}{0.1cm}
\pgfputat{\pgfnodeborder{Node3}{20}{0.2cm}}{\pgfbox[center,center]{$b$}}
\pgfnodeconnline{Node7}{Node3}
\pgfsetendarrow{\pgfarrowpointed}
\pgfxycurve(6.5,0.4)(7.5,0.9)(7.8,3)(7.2,4)
}
\pgfsetdash{{2pt}{0pt}}{0pt}

  \textcolor{black}{
  \pgfnodecircle{Node8}[stroke]{\pgfxy(7,4.3)}{0.1cm}
  \pgfputat{\pgfnodeborder{Node8}{60}{0.3cm}}{\pgfbox[center,center]{$b$}}
  \pgfnodeconnline{Node2}{Node8}
  \pgfnodecircle{Node8}[stroke]{\pgfxy(7,4.3)}{0.1cm}
  \pgfputat{\pgfnodeborder{Node8}{60}{0.3cm}}{\pgfbox[center,center]{$b$}}
  \pgfnodeconnline{Node2}{Node8}
}
  \pgfnodeconnline{Node1}{Node10}
  \pgfnodeconnline{Node13}{Node10}

  \pgfsetdash{{3pt}{3pt}}{0pt}
  \pgfnodeconnline{Node1}{Node4}
  \pgfnodeconnline{Node1}{Node5}
  \pgfnodeconnline{Node1}{Node6}
  \pgfnodeconnline{Node1}{Node9}
  \pgfnodeconnline{Node7}{Node9}
  \pgfnodeconnline{Node10}{Node2}
  \pgfnodeconnline{Node11}{Node2}
  \pgfnodeconnline{Node12}{Node2}
  
  \pgfsetdash{{3pt}{0pt}}{0pt}

  \pgfsetdash{{1pt}{2pt}}{0pt}
  \pgfxycurve(7,2)(0.5,1)(1,3.2)(6,5)
  \pgfputat{\pgfxy(7,3.5)}{\pgfbox[center,center]{$T_1$}}
  \pgfxycurve(3,0.2)(0.5,4)(1.5,4.3)(6.7,0.6)
  \pgfputat{\pgfxy(3.5,0.5)}{\pgfbox[center,center]{$T_2$}}
  \pgfxycurve(0.5,1)(2,1)(4.5 ,3)(2,4.8)
  \pgfputat{\pgfxy(1.5,4.5)}{\pgfbox[center,center]{$S$}}
  \pgfsetdash{{5pt}{1pt}}{0pt}

\textcolor{black}{
  \pgfsetdash{{8pt}{6pt}}{0pt}
  \pgfxycurve(5,2)(2.3,3.6)(3.5,4.3)(7,2.2)
  \pgfputat{\pgfxy(5.5,2.3)}{\pgfbox[center,center]{$T'$}}
}
   \end{pgfmagnify}
   \end{\pic}
\caption{Distance from centroid vertex and $x$ is at most 1} 
\label{fig:notFarAway}

 \end{center}
\end{figure}

It remains to prove that $y$ is a centroid vertex of $\overline{T}$. For this we use inequality \eqref{eq:yIsInCentroid} and Lemma \ref{centroid} and we get
\begin{align*}
 \W_{\overline{T}}(y) < \W_{\overline{T}}(x) \leq \frac{n}{2} + 1
\end{align*}
which completes the proof.
\end{proof}

Proposition~\ref{transformationCentroid1} does not cover the case
$u = x$. So, if we want to replace the edge connecting the leaf
$b$ by an edge which is incident to the centroid vertex $x$ we
have the following result:

\begin{Pro}\label{transformationCentroid2}
Let $T = (V,E)$ be a tree and let further $x$ be a centroid vertex, $b$ a leaf
and $\bar b$ the vertex adjacent to $b$. Let further $\overline{T}
= (V , (E \setminus \{(\bar b,b)\}) \cup \{(x,b)\})$ be the tree
resulting from $T$ by removing the edge $(\bar b,b)$ and inserting
the edge $(x,b)$ instead. Then $x$ is a centroid vertex of
$\overline{T}$.
\end{Pro}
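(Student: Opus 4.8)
The plan is to apply Lemma~\ref{centroid} directly. Since $\overline{T}$ has the same order $n = |V|$ as $T$, and since $x$ is a centroid vertex of $T$ we have $\W_T(x) \le \tfrac{n}{2}$, it suffices to show that the transformation does not increase the branch weight of $x$, that is, $\W_{\overline{T}}(x) \le \W_T(x)$; the lemma then yields at once that $x$ is a centroid vertex of $\overline{T}$. Before doing so I would dispose of the degenerate case $\bar b = x$: there $b$ is already adjacent to $x$, the deleted and the inserted edge coincide, $\overline{T} = T$, and nothing is to be proved. Henceforth assume $\bar b \ne x$, and note also $b \ne x$, as otherwise the inserted edge $(x,b)$ would be a loop; in particular $x$ then has at least one neighbor.

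Next I would track the branches at $x$ explicitly. Let $B_1, \dots, B_k$ be the branches of $T$ at $x$, and let $B_2$ be the one containing $b$; as $b$ is a leaf with neighbor $\bar b \ne x$, the geodesic from $b$ to $x$ runs through $\bar b$, so $\bar b$ lies in $B_2$ as well. Deleting the edge $(\bar b, b)$ and attaching $b$ to $x$ leaves each branch $B_i$ with $i \ne 2$ untouched, replaces $B_2$ by $B_2 \setminus \{b\}$ (lowering its weight by one), and creates a single new pendant branch $\{x, b\}$ of weight $1$. Hence
\begin{align*}
  \W_{\overline{T}}(x) = \max\bigl\{\, |B_2| - 2,\ 1,\ \max_{i \ne 2}\,(|B_i| - 1) \,\bigr\}.
\end{align*}

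The only mildly delicate point is to bound this maximum by $\W_T(x) = \max_i (|B_i| - 1)$. Every term $|B_i| - 1$ with $i \ne 2$, as well as $|B_2| - 2 < |B_2| - 1$, is at most $\W_T(x)$ by definition, so the one genuinely new contribution is the weight $1$ of the pendant branch $\{x, b\}$; but since $x$ has a neighbor we have $\W_T(x) \ge 1$, which dominates this term too. Therefore $\W_{\overline{T}}(x) \le \W_T(x) \le \tfrac{n}{2}$, and Lemma~\ref{centroid} completes the argument. I expect no real obstacle here: the entire content is the bookkeeping of branch weights together with the observation that the newly created branch carries weight only $1$, which is precisely why attaching the leaf directly to the centroid can never raise its branch weight.
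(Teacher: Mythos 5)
Your proof is correct and follows essentially the same route as the paper's: dispose of the case $\bar b = x$, observe that otherwise the transformation only shrinks the branch containing $b$ by one vertex and creates a new pendant branch of weight $1$, conclude $\W_{\overline{T}}(x) \le \W_T(x) \le \tfrac{n}{2}$, and invoke Lemma~\ref{centroid}. Your version is merely a bit more explicit in the bookkeeping (in particular in noting that the weight-$1$ branch is dominated because $\W_T(x) \ge 1$), which the paper leaves implicit.
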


\begin{proof}
If $b$ is already adjacent to $x$ in $T$, then clearly $\overline{T} = T$.
Therefore vertex $x$ remains a centroid vertex. 
Otherwise, there is one
more branch at $x$ in $\overline{T}$ than in $T$ (see Fig.~\ref{fig:newBranch}).
The weight of the new branch is equal to 1  and, since $|T| = |\overline{T}| = n$, the branch weights of $x$ in $T$ and $\overline{T}$ satisfy 
\begin{align*}
 \W_T(x)-1 \leq \W_{\overline{T}}(x)\leq \W_T(x)\leq \frac{n}{2}
\end{align*}
which implies that $x$ is a centroid vertex of $\overline{T}$.

\begin{figure}[!h]
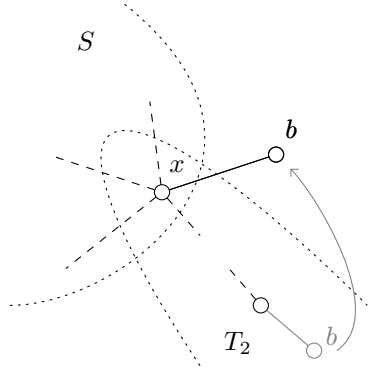

  \begin{center}
    \begin{\pic}{0.4cm}{0.1cm}{5.2cm}{5cm}
       \begin{pgfmagnify}{1}{1}
  
      \pgfnodecircle{Node1}[stroke]{\pgfxy(2.5,2.5)}{0.1cm}
      \pgfnodecircle{Node4}[virtual]{\pgfxy(2.3,4)}{0.3cm}
      \pgfnodecircle{Node5}[virtual]{\pgfxy(1,3)}{0.1cm}
      \pgfnodecircle{Node6}[virtual]{\pgfxy(1,1.3)}{0.3cm}
      \pgfnodecircle{Node7}[stroke]{\pgfxy(3.8,1)}{0.1cm}
      \pgfnodecircle{Node9}[virtual]{\pgfxy(3.2,1.7)}{0.3cm}
      \pgfnodecircle{Node11}[virtual]{\pgfxy(5.4,4.9)}{0.4cm}
      \pgfnodecircle{Node12}[virtual]{\pgfxy(4,4.3)}{0.4cm}

  \pgfputat{\pgfnodeborder{Node1}{60}{0.3cm}}{\pgfbox[center,center]{$x$}}

\textcolor{gray}{
\pgfnodecircle{Node3}[stroke]{\pgfxy(4.5 ,0.4)}{0.1cm}
\pgfputat{\pgfnodeborder{Node3}{40}{0.2cm}}{\pgfbox[center,center]{$b$}}
\pgfnodeconnline{Node7}{Node3}
\pgfsetendarrow{\pgfarrowpointed}
\pgfxycurve(4.8,0.4)(5.5,0.9)(4.5,2.5)(4.2,2.8)
}
\pgfsetdash{{2pt}{0pt}}{0pt}

\textcolor{black}{
  \pgfnodecircle{Node8}[stroke]{\pgfxy(4,3)}{0.1cm}
  \pgfputat{\pgfnodeborder{Node8}{60}{0.3cm}}{\pgfbox[center,center]{$b$}}
  \pgfnodeconnline{Node1}{Node8}
  \pgfnodecircle{Node8}[stroke]{\pgfxy(4,3)}{0.1cm}
  \pgfputat{\pgfnodeborder{Node8}{60}{0.3cm}}{\pgfbox[center,center]{$b$}}
  \pgfnodeconnline{Node1}{Node8}
}

  \pgfsetdash{{3pt}{3pt}}{0pt}
  \pgfnodeconnline{Node1}{Node4}
  \pgfnodeconnline{Node1}{Node5}
  \pgfnodeconnline{Node1}{Node6}
  \pgfnodeconnline{Node1}{Node9}
  \pgfnodeconnline{Node7}{Node9}
  
  \pgfsetdash{{1pt}{1pt}}{0pt}
  \pgfsetdash{{1pt}{2pt}}{0pt}
  \pgfxycurve(3,0.2)(0.5,4)(1.8,4.4)(5.2,1)
  \pgfputat{\pgfxy(3.5,0.5)}{\pgfbox[center,center]{$T_2$}}
  \pgfxycurve(0.5,1)(2,1)(4.5 ,3)(2,5)
  \pgfputat{\pgfxy(1.5,4.5)}{\pgfbox[center,center]{$S$}}
\end{pgfmagnify}
\end{\pic}
\caption{New branch at vertex $x$}
\label{fig:newBranch}

 \end{center}
\end{figure}

\end{proof}

\section{Lower Bounds}
\label{sec:lower}

In this section we derive lower bounds for the status and the radius of
trees. In particular, we shall consider so-called
\textit{$k$-balanced} trees. A tree $T$ of maximum degree $k$ is
called $k$-balanced if there exists a vertex $x$ such that for every 
vertex $z$ with $\dist(z,x) \leq \rad(x) - 2$, $\deg(z) = k$ holds. 
In general there are non-isomorphic $k$-balanced trees of order $n$, 
but it is rather obvious that both the status and the radius of a 
balanced tree depends only on $k$ and $n$. Therefore, we are not interested 
in a certain instance and denote an arbitrary 
$k$-balanced tree of order $n$ by $B_{n,k}$.

The following theorems show that both the status
and the radius of a tree of order $n$ and maximum degree $k$ are
bounded by the status and radius of  $k$-balanced trees of order
$n$. First we state the assertion on the radius.

\begin{The}\label{lowerboundexc}
Let $T$ be a tree of order $n$ and of  maximum degree $\Delta(T) = k$. 
Its radius is greater than or equal to the radius of a 
$k$-balanced tree of order $n$, that is,
\begin{align*}
 \rad(B_{n,k}) \leq \rad(T).
\end{align*}
\end{The}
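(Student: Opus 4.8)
The plan is to prove the inequality by transforming $T$, via the leaf-reallocation of Section~\ref{sec:transformation}, into a $k$-balanced tree whose radius is no larger than $\rad(T)$; since the radius of a $k$-balanced tree of order $n$ depends only on $n$ and $k$, the common value of that radius is $\rad(B_{n,k})$, and the inequality follows. First I would fix a central vertex $c$ of $T$, so that $\ecc(c) = \rad(T) =: r$, and read the definition of $k$-balanced with the witness $x = c$: the tree is balanced as soon as every vertex $z$ in the \emph{core} $\{z : \dist(z,c) \le \ecc(c) - 2\}$ satisfies $\deg(z) = k$. If $T$ already has this property we are done; otherwise there is a core vertex $z$ with $\deg(z) < k$. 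I then choose a vertex $b$ at maximal distance $\ecc(c)$ from $c$ — necessarily a leaf — and reallocate it, deleting the edge to its neighbour and attaching $b$ to $z$.

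Next I would check that a single such step keeps us inside the class of trees of order $n$ and maximum degree $k$ and does not increase the radius. The order is unchanged, and since $\deg(z) < k$ before the step while the old neighbour of $b$ only loses degree, the maximum degree stays $\le k$. The decisive structural observation is that $b$ is a leaf, so deleting it leaves every path among the remaining vertices intact; hence the only distance to $c$ that changes is $\dist(c,b)$ itself, which drops from $\ecc(c)$ to $\dist(c,z)+1 \le \ecc(c)-1$. Consequently $\ecc(c)$ cannot increase, so $\rad(\overline{T}) \le \ecc_{\overline{T}}(c) \le \ecc_T(c) = \rad(T)$. This is exactly the ``easy to determine'' behaviour of the radius under the transformation noted in Section~\ref{sec:transformation}, and it is what makes the radius case far shorter than the status case.

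The remaining point, which I expect to be the main obstacle, is termination: I must guarantee that iterating the step reaches a balanced tree rather than cycling. Here I would use the status of the fixed vertex $c$, that is $\s(c) = \sum_{v} \dist(c,v)$, as a monotone potential. Each step moves one vertex strictly closer to $c$ and leaves all other distances unchanged, so $\s(c)$ drops by at least $1$; being a non-negative integer, it can decrease only finitely often. Thus after finitely many steps no admissible $z$ remains, i.e.\ the current tree $\overline{T}$ is core-full at $c$ and hence $k$-balanced, while along the whole chain the radius never increased, giving $\rad(\overline{T}) \le \rad(T)$. Invoking the fact that all $k$-balanced trees of order $n$ have the same radius then yields $\rad(B_{n,k}) = \rad(\overline{T}) \le \rad(T)$. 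The loose ends I would still tidy up are the degenerate small cases ($r \le 1$, where the core is empty and $T$ is a star or smaller), and the convention of reading $\rad(x)$ in the definition of balancedness as the eccentricity $\ecc(x)$ of the witness $x$; as a cross-check one can confirm independently, by the level-counting bound $n \le 1 + k\sum_{i=0}^{r-1}(k-1)^i$, that this common radius is the least $\rho$ with $1 + k\sum_{i=0}^{\rho-1}(k-1)^i \ge n$.
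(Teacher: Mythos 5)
Your proposal is correct, and it runs on the same engine as the paper's own proof: starting from a central vertex, repeatedly detach a farthest leaf $b$ and re-attach it to a vertex $z$ with $\deg(z)<k$ lying within distance $\ecc(c)-2$ of the center, until a $k$-balanced tree is reached. The differences are in the bookkeeping, and two of them favor your version. Where the paper argues that the radius cannot increase by tracking longest paths (``if $T$ has just one longest path, the transformation shortens it by 1; if there are several, the radius remains the same''), you track the eccentricity of the fixed vertex $c$: since $b$ is a leaf, no other distance to $c$ changes, so $\ecc(c)$ is non-increasing along the whole iteration and $\rad(\T)\le\ecc_{\T}(c)\le\ecc_T(c)=\rad(T)$; this is cleaner and avoids any appeal to the diameter--radius relation in trees. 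You also make the termination of the iteration explicit via the potential $\s(c)$, which the paper leaves implicit. The one place where you are weaker than the paper is the degree invariant: the paper chooses $b$ (an endvertex of a longest path) so that $\Delta(T\setminus\{b\})=k$, keeping the maximum degree exactly $k$ after every step, whereas you only guarantee $\Delta\le k$. That relaxation is legitimate, but it obliges you to verify at termination that the maximum degree is exactly $k$ --- by definition a $k$-balanced tree has maximum degree $k$ --- and you never do. The check is easy and should be added: if $\ecc(c)\ge 2$ at termination, then $c$ itself lies in the core, so $\deg(c)=k$; if $\ecc(c)\le 1$, the terminal tree is a star centered at $c$, and $n-1=\deg(c)\le k\le n-1$ forces $k=n-1$. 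Without it, your terminal tree could a priori be only $\ell$-balanced for some $\ell<k$, and comparing $\rad(B_{n,\ell})$ with $\rad(B_{n,k})$ is exactly the content of the lemma in Section~\ref{sec:generalgraphs}, whose proof in turn cites Theorem~\ref{lowerboundexc} --- so you cannot fall back on it here.
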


\begin{proof}
Let $x$ be a central vertex of $T$ and $r= \rad(T)$. If $T$ is not
$k$-balanced, there exists a vertex $u$ such that $\deg_T(u) < k$
and $d_T(x,u) \leq r-2$. Further let $b$ be an endvertex of a
longest path. We can choose $b$ such
that $\Delta(T\setminus \{b\}) = k$. Now let $\T$ be the tree
which results by removing the edge connecting $b$ and inserting
the edge $(u,b)$ instead. Obviously $\T$ is a tree of order $n$
and maximum degree $k$.

If $T$ has just one longest path, the transformation from $T$ to
$\T$ shortens the longest path by 1. If there are several longest
paths, the radius remains the same. Overall we can conclude that
$\rad(\T) \leq \rad(T)$ holds in every case.
\end{proof}

As already mentioned the status behaves similarly. In fact, in
case of the status the bounds are even strict, that is,
$s(T)=s(B_{n,k})$ if and only if $T$ is $k$-balanced, cf. Lin et al.~in
\cite{LinShangZhang11}. We will give here an alternate proof for
this statement using the method described in the Section 2.
The proof is similar to the one of Theorem~\ref{lowerboundexc}, only
the choice of the leaf used for the transformation needs to be
done more carefully.

\begin{The}(Lin et al., 2011, \cite{LinShangZhang11})\label{lowerboundstatus}
Let $T$ be a tree of order $n$ and maximum degree $\Delta(T) = k$.
Its status is greater than or equal to the status of a $k$-balanced tree
of order $n$, that is,
\begin{equation*}
 \s(B_{n,k}) \leq \s(T)
\end{equation*}
with equality if and only if $T = B_{n,k}$.
\end{The}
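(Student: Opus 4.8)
My plan is to adapt the proof of Theorem~\ref{lowerboundexc}, using a centroid vertex in place of a central vertex and tracking the status rather than the eccentricity. The computational heart is elementary. Fix a centroid vertex $x$ of $T$, so that $\s(x)=\s(T)$, and let $\overline{T}$ arise from $T$ by detaching a leaf $b$ from its neighbour $\bar b$ and re-attaching it at a vertex $u$. Deleting and re-inserting a leaf leaves all distances among the remaining vertices unchanged, so $\dist_{\overline T}(v,b)=\dist(v,u)+1$ for every $v\neq b$, and therefore
\[
 \s_{\overline T}(x)-\s(x)=\dist(x,u)-\dist(x,\bar b).
\]
Hence, whenever the leaf is moved strictly closer to $x$, i.e. $\dist(x,u)<\dist(x,\bar b)$, we get $\s_{\overline T}(x)<\s(x)=\s(T)$, and since $\s(\overline T)=\min_v\s_{\overline T}(v)\le\s_{\overline T}(x)$ this already yields the strict drop $\s(\overline T)<\s(T)$. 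Note that this argument needs no control over the centroid of $\overline{T}$; Propositions~\ref{transformationCentroid1} and~\ref{transformationCentroid2} would be required only to evaluate the new status exactly, which I do not need here.

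Given this, it suffices to prove a reallocation lemma: if $T$ has order $n$ and maximum degree $k$ but is not $k$-balanced, then some leaf can be moved so that $\overline{T}$ again has order $n$ and maximum degree $k$ while $\s(\overline T)<\s(T)$. The order is preserved automatically, so the only constraint to respect is $\Delta(\overline T)=k$. Write $e=\ecc(x)$. Since $T$ is not $k$-balanced, $x$ in particular fails the defining condition, so there is a vertex of degree $<k$ at distance $\le e-2$ from $x$; let $\ell\le e-2$ be the least such distance, so that every vertex nearer to $x$ than $\ell$ has full degree $k$, and in particular $e\ge2$. If $\ell\ge1$, then $x$ itself is full: I move a leaf $b$ at distance $e$ (so $\dist(x,\bar b)=e-1\ge\ell+1$) and re-attach it at a vertex $z_0$ of degree $<k$ at distance $\ell$. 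This strictly lowers the status because $\ell<e-1$; the degree of $z_0$ rises to at most $k$, while the untouched full vertex $x$ retains degree $k$, so $\Delta(\overline T)=k$.

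The delicate case is $\ell=0$, where $x$ has degree $<k$ and I re-attach a leaf directly to $x$ (the situation of Proposition~\ref{transformationCentroid2}). Any leaf at distance $\ge2$ then produces the strict decrease, and the only point to secure is $\Delta(\overline T)=k$. This is where the main obstacle lies: re-attaching to $x$ lowers $\deg\bar b$ by one, which can destroy maximality of the degree exactly when $T$ has a unique vertex $w$ of degree $k$, that vertex is $\bar b$, and $\deg x\le k-2$. So the essential task, and the reason the leaf must be chosen ``more carefully'' than in Theorem~\ref{lowerboundexc}, is to exhibit a decreasing move in this last configuration.

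To overcome it I show a usable leaf always exists. If $\deg x=k-1$, then re-attaching any leaf at distance $\ge2$ turns $x$ into a vertex of degree $k$, so $\Delta(\overline T)=k$ is automatic. If $\deg x\le k-2$, I choose a leaf $b$ at distance $\ge2$ with $\bar b\neq w$, which leaves $w$ at degree $k$. Such a leaf exists: otherwise every leaf at distance $\ge2$ from $x$ would be adjacent to $w$, so the only nontrivial branch at $x$ would be the one, say $T'$, containing $w$, and the remaining branches would amount to at most $k-3$ pendant leaves. But then $T'$ contains the whole path from $x$ to $w$ (of length $e-1$) together with the $k-1$ leaves hanging at $w$, whence $|T'|\ge e+k-1\ge k+1$, while the centroid inequality~\eqref{eq:centroidInequs} forces $|T'|\le(k-3)+1=k-2$; this contradiction proves the claim and hence the reallocation lemma. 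Finally, among the finitely many trees of order $n$ and maximum degree $k$ choose one of minimum status; it admits no decreasing reallocation, so by the lemma it is $k$-balanced and its status equals $\s(B_{n,k})$, which depends only on $n$ and $k$. Every non-$k$-balanced tree admits a strictly better reallocation and therefore has status strictly larger than $\s(B_{n,k})$, while every $k$-balanced tree attains $\s(B_{n,k})$; this is exactly $\s(B_{n,k})\le\s(T)$ with equality if and only if $T=B_{n,k}$.
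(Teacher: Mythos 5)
Your proof is correct, and it takes a genuinely leaner route than the paper at the decisive step. The skeleton coincides with the paper's: reallocate a leaf at a centroid vertex $x$ (which is a median by Zelinka's theorem, as cited in the introduction), split according to whether $x$ has full degree (your $\ell\ge 1$ versus $\ell=0$ is exactly the paper's case split $\deg(x)=k$ versus $\deg(x)<k$), and conclude by extremality over the finitely many trees of given order and maximum degree. The difference lies in how the strict decrease is certified. The paper locates a centroid vertex of $\overline T$ via Propositions~\ref{transformationCentroid1} and~\ref{transformationCentroid2} and then evaluates $\s(\overline T)$ exactly at that vertex; you instead observe that $\s(\overline T)\le \s_{\overline T}(x)$ trivially, so the identity $\s_{\overline T}(x)-\s_{T}(x)=\dist(x,u)-\dist(x,\bar b)$ already yields $\s(\overline T)<\s(T)$ whenever the leaf moves strictly closer to $x$. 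This makes the centroid-tracking machinery of Section~\ref{sec:transformation} unnecessary for the lower bound, which is a real simplification; what the paper's heavier approach buys is reusability, since for the upper bound (Theorem~\ref{upperboundstatus}) one must bound $\s(\overline T)$ from \emph{below}, the one-sided inequality is useless there, and knowing where the new minimum is attained -- precisely the content of Propositions~\ref{transformationCentroid1} and~\ref{transformationCentroid2} -- becomes essential. Your handling of the delicate configuration ($\deg(x)\le k-2$ with the unique degree-$k$ vertex adjacent to the chosen leaf) is a correct, fleshed-out version of the paper's terse parenthetical remark in its second case, resting properly on \eqref{eq:centroidInequs}. Two small elisions deserve a line each but are not gaps: $\dist(x,w)=e-1$ holds because the leaf at distance $e$ is adjacent to $w$ while $w$, having degree $k\ge 3$, cannot itself be a farthest vertex from $x$; and $\s_T(x)=\s(T)$ rests on the median--centroid identification, so that $x$ can simultaneously serve in your status formula and in the branch-weight inequality \eqref{eq:centroidInequs}.
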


\begin{proof}
In case $T$ is not a $k$-balanced tree  we can
transform $T$ to a tree $\T$ of same order and maximum degree such
that $\s(\T) < \s(T)$. Let $x$ be a centroid vertex of $T$ and $r
= \ecc(x)$ its eccentricity. We split into the two cases
\begin{enumerate}
 \item[(1)] $\deg(x) = k$ and
 \item[(2)] $\deg(x) < k$.
\end{enumerate}

First, let us consider the case $\deg(x) = k$. Since $T$ is
 not $k$-balanced, we know that $r>1$ and  that there exists a vertex $u$ such
that $\dist_T(x,u) \leq r-2$ and $\deg_T(u) < k$. Choose a leaf
$b$ with $\dist_T(x,b) = r$. Every
choice of $b$ yields $\Delta(T\setminus\{b\}) = k$. Let $T_1$ be
the branch at $x$ containing $u$, $T_2$ the branch containing $b$,
$S = (T \setminus(T_1 \cup T_2)) \cup \{x\}$ the union of all
other branches and $\T$ be the tree which results by removing the
edge connecting $b$ and inserting the edge $(u,b)$ instead. Since
$\deg(x) = k$ we know that $x \neq u$ and therefore we can apply
Proposition \ref{transformationCentroid1}. Thus we know, that $x$
is a centroid vertex of $\bar T$ if $T_1 = T_2$ or $|T_1| < |T_2| + |S| -
1$. In this case we obtain the following.
\begin{align*}
 \s(\T) &= \s(x) = \sum_{z \in \T} \dist_{\T}(x,z) \\
        &= \sum_{z \in T \setminus \{b\}} \dist_T(x,z) + \dist_{\T}(x,b) \\
        &= \s(T) - \dist_T(x,b) + \dist_T(x,u) + 1 \\
        &< \s(T).
\end{align*}
\begin{figure}[!h]
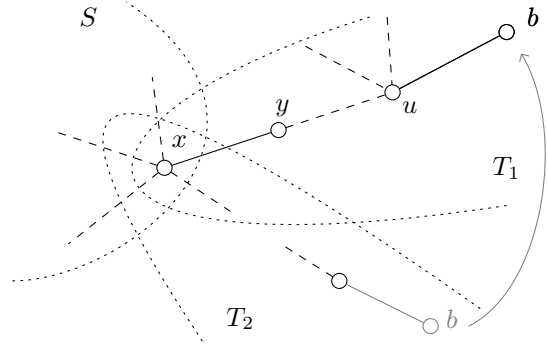

  \centering
    \begin{\pic}{0.4cm}{0.1cm}{7.5cm}{4.7cm}
     \begin{pgfmagnify}{1}{1}
      \pgfnodecircle{Node1}[stroke]{\pgfxy(2.5,2.5)}{0.1cm}
      \pgfnodecircle{Node2}[stroke]{\pgfxy(5.5,3.5)}{0.1cm}
      \pgfnodecircle{Node4}[virtual]{\pgfxy(2.3,4)}{0.3cm}
      \pgfnodecircle{Node5}[virtual]{\pgfxy(1,3)}{0.1cm}
      \pgfnodecircle{Node6}[virtual]{\pgfxy(1,1.3)}{0.3cm}
      \pgfnodecircle{Node7}[stroke]{\pgfxy(4.8,1)}{0.1cm}
      \pgfnodecircle{Node9}[virtual]{\pgfxy(3.7,1.7)}{0.4cm}
      \pgfnodecircle{Node10}[stroke]{\pgfxy(4,3)}{0.1cm}
      \pgfnodecircle{Node11}[virtual]{\pgfxy(5.4,4.9)}{0.4cm}
      \pgfnodecircle{Node12}[virtual]{\pgfxy(4,4.3)}{0.4cm}
      \pgfnodecircle{Node14}[virtual]{\pgfxy(0.2,2)}{0.4cm}
      \pgfputat{\pgfnodeborder{Node1}{60}{0.3cm}}{\pgfbox[center,center]{$x$}}
      \pgfputat{\pgfnodeborder{Node2}{320}{0.2cm}}{\pgfbox[center,center]{$u$}}
      \pgfputat{\pgfnodeborder{Node10}{80}{0.2cm}}{\pgfbox[center,center]{$y$}}
      \textcolor{gray}{
        \pgfnodecircle{Node3}[stroke]{\pgfxy(6,0.4)}{0.1cm}
        \pgfputat{\pgfnodeborder{Node3}{20}{0.2cm}}{\pgfbox[center,center]{$b$}}
         \pgfnodeconnline{Node7}{Node3}
        \pgfsetendarrow{\pgfarrowpointed}
        \pgfxycurve(6.5,0.4)(7.5,0.9)(7.8,3)(7.2,4)
      }
      \pgfsetdash{{2pt}{0pt}}{0pt}
      \textcolor{black}{
        \pgfnodecircle{Node8}[stroke]{\pgfxy(7,4.3)}{0.1cm}
        \pgfputat{\pgfnodeborder{Node8}{30}{0.3cm}}{\pgfbox[center,center]{$b$}}
        \pgfnodeconnline{Node2}{Node8}
        \pgfnodecircle{Node8}[stroke]{\pgfxy(7,4.3)}{0.1cm}
        \pgfputat{\pgfnodeborder{Node8}{30}{0.3cm}}{\pgfbox[center,center]{$b$}}
        \pgfnodeconnline{Node2}{Node8}
      }
      \pgfnodeconnline{Node1}{Node10}
      \pgfsetdash{{3pt}{3pt}}{0pt}
      \pgfnodeconnline{Node1}{Node4}
      \pgfnodeconnline{Node1}{Node5}
      \pgfnodeconnline{Node1}{Node6}
      \pgfnodeconnline{Node1}{Node9}
      \pgfnodeconnline{Node7}{Node9}
      \pgfnodeconnline{Node10}{Node2}
      \pgfnodeconnline{Node11}{Node2}
      \pgfnodeconnline{Node12}{Node2}
      \pgfsetdash{{3pt}{0pt}}{0pt}
        \pgfsetdash{{1pt}{2pt}}{0pt}
        \pgfxycurve(7,2)(0.5,1)(1,3.2)(5.2,4.5)
        \pgfputat{\pgfxy(7,2.5)}{\pgfbox[center,center]{$T_1$}}
        \pgfxycurve(3,0.2)(0.5,4)(1.5,4.3)(6.7,0.6)
        \pgfputat{\pgfxy(3.5,0.5)}{\pgfbox[center,center]{$T_2$}}
      \pgfxycurve(0.5,1)(2,1)(4.5 ,3)(2,4.7)
      \pgfputat{\pgfxy(1.5,4.5)}{\pgfbox[center,center]{$S$}}
     \end{pgfmagnify}
    \end{\pic}
    \caption{$y$ is a centroid vertex after transformation}
    \label{fig:centroidIsMoving}
\end{figure}

In case $T_1 \neq T_2$ and $|T_1| \geq |T_2| + |S| - 1$ we know
that the neighbour $y$ of $x$ on the shortest path from $x$ to $u$
is a centroid vertex (see Fig. \ref{fig:centroidIsMoving}).
\noindent We get
\begin{align*}
 \s(\T) &= \s(y) = \sum_{z \in \T} \dist_{\T}(y,z) \\
        &= \sum_{z \in (T_1 \cup \{b\}) \setminus \{x\}} \dist_{\T}(y,z) + \sum_{z \in S \cup (T_2 \setminus \{b\}) } \dist_{\T}(y,z) \\
        &= \sum_{z \in T_1\setminus \{x\}} (\dist_T(x,z) - 1) + \dist_{\T}(y,b) + \sum_{z \in S \cup (T_2 \setminus \{b\})} (\dist_T(x,z) + 1) \\
        &= \s(T) - \dist_T(x,b) - (|T_1|- 1) + \dist_T(x,u) + |S| + |T_2| - 2 \\
        &= \s(T)  + (\dist_T(x,u) - \dist_T(x,b)) + (|S| + |T_2| -|T_1|- 1)\\
        &< \s(T).
\end{align*}
Thus case $\deg(x) = k$ is complete.

Henceforth let us consider the case $\deg_T(x) < k$. Since $T$ is
not $k$-balanced we can choose a leaf $b$ such that $\dist_T(x,b)>1$ 
and $\Delta(T\setminus\{b\}) = k$. 
(Otherwise $x$ would have $\deg(x) - 1$ branches which consist of a single leaf 
and one branch with a vertex of degree $k$, which contradicts the assumption that 
$x$ is a centroid vertex.) Let $\T$ be the tree which results
by removing the edge connecting $b$ and inserting edge $(x,b)$.
According to Proposition \ref{transformationCentroid2} $x$ is a
centroid vertex of  $\T$ and we obtain
\begin{align*}
 \s(\T) = \s(T) - \dist_T(x,b) + 1
        &< \s(T).
\end{align*}
\end{proof}

\section{Upper Bounds}
\label{sec:upper}

In this section we discuss upper bounds for the radius and status
of trees. First we introduce star-like trees, so-called
\textit{$k$-comets}. 
Let $n\ge 3$ and the maximum degree $k \ge 2$. We say a tree of order
$n$ is a  \textit{$k$-comet} $C_{n,k}$ if there exists a vertex $v$ 
of degree $k$ which is lying on a path of  length $n-k+1$.
A $k$-comet is called $S_{n,k}$ if $k-1$ neighbors of $v$ are leaves. 

But there is also another star-like tree we need to consider, it is a tree which is 
``almost'' a $C_{n,k}$. More precisely, we denote a tree of order $n$ and maximum degree $k$ 
by $C^{\star}_{n,k}$ if it contains a $C_{n-1,k}$ as a subtree. 
For examples see Figures~\ref{fig:comets}. 
A tree $S_{n,k}$ is uniquely defined whereas the trees $C_{n,k}$ and $C^{\star}_{n,k}$ may have different topologies.

\begin{figure}[!h]
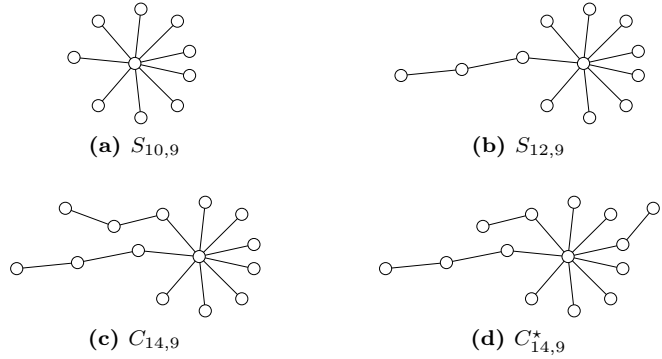


\centering
\subfloat[$S_{10,9}$]{ %
  \begin{\pic}{1.35cm}{0.8cm}{5.05cm}{2.5cm}
    \begin{pgfmagnify}{0.8}{0.8}
      \pgfnodecircle{Node1}[stroke]{\pgfxy(4,2)}{0.1cm}
      \pgfnodecircle{Node2}[stroke]{\pgfxy(4.1,2.9)}{0.1cm}
      \pgfnodecircle{Node3}[stroke]{\pgfxy( 4.7 ,2.7 )}{0.1cm}
      \pgfnodecircle{Node4}[stroke]{\pgfxy(4.9,2.2)}{0.1cm}
      \pgfnodecircle{Node5}[stroke]{\pgfxy(4.9,1.8)}{0.1cm}
      \pgfnodecircle{Node6}[stroke]{\pgfxy(4.7,1.3)}{0.1cm}
      \pgfnodecircle{Node7}[stroke]{\pgfxy(4.1,1.1)}{0.1cm}
      \pgfnodecircle{Node8}[stroke]{\pgfxy(3.4,1.3)}{0.1cm}
      \pgfnodecircle{Node9}[stroke]{\pgfxy(3.4,2.7)}{0.1cm}
      \pgfnodecircle{Node10}[stroke]{\pgfxy(3,2.1)}{0.1cm}
      \pgfnodeconnline{Node1}{Node2}
      \pgfnodeconnline{Node1}{Node3}
      \pgfnodeconnline{Node1}{Node4}
      \pgfnodeconnline{Node1}{Node5}
      \pgfnodeconnline{Node1}{Node6}
      \pgfnodeconnline{Node1}{Node7}
      \pgfnodeconnline{Node1}{Node8}
      \pgfnodeconnline{Node1}{Node9}
      \pgfnodeconnline{Node1}{Node10}
    \end{pgfmagnify}
  \end{\pic}
}
\hspace{1cm}
\subfloat[$S_{12,9}$]{ %
  \begin{\pic}{0.5cm}{0.8cm}{4.2cm}{2.5cm}
    \begin{pgfmagnify}{0.8}{0.8}
      \pgfnodecircle{Node1}[stroke]{\pgfxy(4,2)}{0.1cm}
      \pgfnodecircle{Node2}[stroke]{\pgfxy(4.1,2.9)}{0.1cm}
      \pgfnodecircle{Node3}[stroke]{\pgfxy( 4.7 ,2.7 )}{0.1cm}
      \pgfnodecircle{Node4}[stroke]{\pgfxy(4.9,2.2)}{0.1cm}
      \pgfnodecircle{Node5}[stroke]{\pgfxy(4.9,1.8)}{0.1cm}
      \pgfnodecircle{Node6}[stroke]{\pgfxy(4.7,1.3)}{0.1cm}
      \pgfnodecircle{Node7}[stroke]{\pgfxy(4.1,1.1)}{0.1cm}
      \pgfnodecircle{Node8}[stroke]{\pgfxy(3.4,1.3)}{0.1cm}
      \pgfnodecircle{Node9}[stroke]{\pgfxy(3.4,2.7)}{0.1cm}
      \pgfnodecircle{Node10}[stroke]{\pgfxy(3,2.1)}{0.1cm}
      \pgfnodecircle{Node11}[stroke]{\pgfxy(2,1.9)}{0.1cm}
      \pgfnodecircle{Node12}[stroke]{\pgfxy(1,1.8)}{0.1cm}
      \pgfnodeconnline{Node1}{Node2}
      \pgfnodeconnline{Node1}{Node3}
      \pgfnodeconnline{Node1}{Node4}
      \pgfnodeconnline{Node1}{Node5}
      \pgfnodeconnline{Node1}{Node6}
      \pgfnodeconnline{Node1}{Node7}
      \pgfnodeconnline{Node1}{Node8}
      \pgfnodeconnline{Node1}{Node9}
      \pgfnodeconnline{Node1}{Node10}
      \pgfnodeconnline{Node10}{Node11}
      \pgfnodeconnline{Node12}{Node11}
    \end{pgfmagnify}
  \end{\pic}
}

\subfloat[$C_{14,9}$]{ %
  \begin{\pic}{0.5cm}{0.8cm}{4.2cm}{2.5cm}
    \begin{pgfmagnify}{0.8}{0.8}
      \pgfnodecircle{Node1}[stroke]{\pgfxy(4,2)}{0.1cm}
      \pgfnodecircle{Node2}[stroke]{\pgfxy(4.1,2.9)}{0.1cm}
      \pgfnodecircle{Node3}[stroke]{\pgfxy( 4.7 ,2.7 )}{0.1cm}
      \pgfnodecircle{Node4}[stroke]{\pgfxy(4.9,2.2)}{0.1cm}
      \pgfnodecircle{Node5}[stroke]{\pgfxy(4.9,1.8)}{0.1cm}
      \pgfnodecircle{Node6}[stroke]{\pgfxy(4.7,1.3)}{0.1cm}
      \pgfnodecircle{Node7}[stroke]{\pgfxy(4.1,1.1)}{0.1cm}
      \pgfnodecircle{Node8}[stroke]{\pgfxy(3.4,1.3)}{0.1cm}
      \pgfnodecircle{Node9}[stroke]{\pgfxy(3.4,2.7)}{0.1cm}
      \pgfnodecircle{Node10}[stroke]{\pgfxy(3,2.1)}{0.1cm}
      \pgfnodecircle{Node11}[stroke]{\pgfxy(2,1.9)}{0.1cm}
      \pgfnodecircle{Node12}[stroke]{\pgfxy(1,1.8)}{0.1cm}
      \pgfnodecircle{Node13}[stroke]{\pgfxy(2.6,2.5)}{0.1cm}
      \pgfnodecircle{Node14}[stroke]{\pgfxy(1.8,2.8)}{0.1cm}
      \pgfnodeconnline{Node1}{Node2}
      \pgfnodeconnline{Node1}{Node3}
      \pgfnodeconnline{Node1}{Node4}
      \pgfnodeconnline{Node1}{Node5}
      \pgfnodeconnline{Node1}{Node6}
      \pgfnodeconnline{Node1}{Node7}
      \pgfnodeconnline{Node1}{Node8}
      \pgfnodeconnline{Node1}{Node9}
      \pgfnodeconnline{Node1}{Node10}
      \pgfnodeconnline{Node10}{Node11}
      \pgfnodeconnline{Node12}{Node11}
      \pgfnodeconnline{Node9}{Node13}
      \pgfnodeconnline{Node13}{Node14}
    \end{pgfmagnify}
   \label{fig:CometSameRadius}
  \end{\pic}
}
\hspace{1cm}
\subfloat[$C^{\star}_{14,9}$]{ %
    \begin{\pic}{0.7cm}{0.8cm}{4.4cm}{2.5cm}
      \begin{pgfmagnify}{0.8}{0.8}
        \pgfnodecircle{Node1}[stroke]{\pgfxy(4,2)}{0.1cm}
        \pgfnodecircle{Node2}[stroke]{\pgfxy(4.1,2.9)}{0.1cm}
        \pgfnodecircle{Node3}[stroke]{\pgfxy( 4.7 ,2.7 )}{0.1cm}
        \pgfnodecircle{Node4}[stroke]{\pgfxy(4.9,2.2)}{0.1cm}
        \pgfnodecircle{Node5}[stroke]{\pgfxy(4.9,1.8)}{0.1cm}
        \pgfnodecircle{Node6}[stroke]{\pgfxy(4.7,1.3)}{0.1cm}
        \pgfnodecircle{Node7}[stroke]{\pgfxy(4.1,1.1)}{0.1cm}
        \pgfnodecircle{Node8}[stroke]{\pgfxy(3.4,1.3)}{0.1cm}
        \pgfnodecircle{Node9}[stroke]{\pgfxy(3.4,2.7)}{0.1cm}
        \pgfnodecircle{Node10}[stroke]{\pgfxy(3,2.1)}{0.1cm}
        \pgfnodecircle{Node11}[stroke]{\pgfxy(2,1.9)}{0.1cm}
        \pgfnodecircle{Node12}[stroke]{\pgfxy(1,1.8)}{0.1cm}
        \pgfnodecircle{Node13}[stroke]{\pgfxy(2.6,2.5)}{0.1cm}
        \pgfnodecircle{Node14}[stroke]{\pgfxy(5.4,2.8)}{0.1cm}
        \pgfnodeconnline{Node1}{Node2}
        \pgfnodeconnline{Node1}{Node3}
        \pgfnodeconnline{Node1}{Node4}
        \pgfnodeconnline{Node1}{Node5}
        \pgfnodeconnline{Node1}{Node6}
        \pgfnodeconnline{Node1}{Node7}
        \pgfnodeconnline{Node1}{Node8}
        \pgfnodeconnline{Node1}{Node9}
        \pgfnodeconnline{Node1}{Node10}
        \pgfnodeconnline{Node10}{Node11}
        \pgfnodeconnline{Node12}{Node11}
        \pgfnodeconnline{Node9}{Node13}
        \pgfnodeconnline{Node4}{Node14}
      \end{pgfmagnify}
    \end{\pic}
    \label{fig:noCometButSameRadius}

}
\caption{$k$-comets}
    \label{fig:comets}
\end{figure}

It will be shown that both radius and status take their maximum
value on these star-like trees. First we present the statement
for the radius. Note that the upper bound of the radius was already 
shown by Vizing \cite[Lemma 1]{Viz67}.

\begin{The}\label{upperboundexc}
Let $T$ be a tree of order $n$ and maximum  degree $\Delta(T) = k$. Then its radius is less than or equal to $\lceil
(n-k+1)/2\rceil$.
For equality we get in particular the following.
\begin{enumerate}
 \item[(i)] If $n-k+1$ is even, then $\rad(T) =\frac{n-k+1}{2}$ if and
       only if  $T=C^{\star}_{n,k}$.
 \item[(ii)] If $n-k+1$ is odd, then $\rad(T) = \frac{n-k}{2} +1 $ if and
       only if  $T=C_{n,k}$.
\end{enumerate}

\end{The}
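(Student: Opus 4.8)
The plan is to reduce the whole statement to a bound on the diameter $D := \operatorname{diam}(T)$. For a tree one has $\rad(T) = \lceil D/2 \rceil$, and every central vertex lies on every diametral path; I would first recall this classical fact (or prove it in one line by noting that the center of a tree is the middle vertex, resp.\ the two middle vertices, of a longest path). Granting it, the inequality $\rad(T) \le \lceil (n-k+1)/2 \rceil$ follows at once from the claim $D \le n-k+1$, and the two equality statements become a parity bookkeeping on $D$.

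Next I would establish $D \le n-k+1$ by a vertex count. Fix a diametral path $P = v_0 v_1 \cdots v_D$; its endpoints $v_0, v_D$ are leaves, so $P$ contributes $D+1$ vertices. Let $w$ be a vertex with $\deg(w)=k$. If $w$ lies on $P$ it is an interior vertex, hence has exactly two neighbours on $P$ and $k-2$ neighbours off $P$, whence $n \ge (D+1)+(k-2)$, i.e.\ $D \le n-k+1$; moreover equality forces the off-path vertices to be precisely $k-2$ leaves attached to $w$, which is exactly the description of $C_{n,k}$. If instead $w \notin P$, let the path from $w$ to $P$ have length $t\ge 1$ and meet $P$ at $q$; this path supplies $t$ off-path vertices, while the $k-1$ neighbours of $w$ not pointing towards $q$ start $k-1$ further disjoint off-path branches, so $n \ge (D+1)+t+(k-1) \ge D+k+1$ and hence $D \le n-k-1$. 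In particular $D=n-k+1$ can occur only in the first case and only for $T=C_{n,k}$. This already yields $\rad(T)=\lceil D/2\rceil \le \lceil (n-k+1)/2\rceil$.

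For the equality cases I would compare $\lceil D/2\rceil$ with $\lceil (n-k+1)/2\rceil$. When $n-k+1$ is odd, $\lceil D/2\rceil = \lceil (n-k+1)/2\rceil = \tfrac{n-k}{2}+1$ forces $D=n-k+1$, and by the count above this happens exactly for $T=C_{n,k}$; the converse is immediate since $C_{n,k}$ has a diametral path of length $n-k+1$. This proves (ii). When $n-k+1$ is even, $\lceil D/2\rceil = \tfrac{n-k+1}{2}$ holds precisely for $D\in\{n-k,\,n-k+1\}$. The value $D=n-k+1$ again gives $T=C_{n,k}$, and deleting the far endpoint of a longest arm at $w$ exposes a $C_{n-1,k}$, so such a $T$ is a $C^{\star}_{n,k}$. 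For $D=n-k$ the count forces $w\in P$ again (otherwise $D\le n-k-1$), and now there is exactly one off-path vertex beyond the $k-2$ leaves at $w$; I would argue this surplus vertex is a leaf of $T$ whose removal leaves a degree-$k$ vertex on a path of length $n-k=(n-1)-k+1$, i.e.\ a $C_{n-1,k}$, so once more $T=C^{\star}_{n,k}$. Conversely any $C^{\star}_{n,k}$ is a $C_{n-1,k}$ with one extra leaf, hence has $D\in\{n-k,\,n-k+1\}$ and radius $\tfrac{n-k+1}{2}$; this proves (i).

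The main obstacle is precisely this even-case equality: translating the metric condition $D\in\{n-k,\,n-k+1\}$ into the combinatorial definition of $C^{\star}_{n,k}$ (``contains $C_{n-1,k}$''). The delicate step is re-running the vertex count with a deficit of one to show that the lone surplus vertex must attach as a leaf to a comet skeleton without lengthening the diametral path, together with checking that $C_{n,k}$ is genuinely subsumed under $C^{\star}_{n,k}$ (so that listing only $C^{\star}_{n,k}$ in (i) loses nothing). Finally I would inspect the degenerate small cases separately --- most notably $n-k+1=2$, where $T$ is the star $K_{1,k}$ --- since there the auxiliary tree $C_{n-1,k}$ may fail to exist and the definitions need to be read with care.
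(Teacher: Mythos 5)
Your proposal is correct and takes essentially the same route as the paper's proof: bound the length $\ell$ of a longest path by $n-k+1$ via a counting argument around the degree-$k$ vertex, combine this with the tree identity $\rad(T)=\lceil \ell/2\rceil$, and then settle the equality cases by parity. If anything, your write-up is more rigorous than the paper's own, which compresses the even equality case into the remark that one edge's ``position is not important,'' and you rightly flag the degenerate situation $n-k+1=2$, where $C_{n-1,k}$ does not exist and statement (i) must be read with care.
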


\begin{proof}
The proof of this assertion is straight-forward counting exercise. Let $\ell$ be the length of the longest path. We need to have at least one vertex of degree $k$, so $\ell +k-2$ edges are in use. Clearly $\ell +k-2 \leq n-1$, which implies that $\ell \leq n-k+1$.

Now assume $T$ is a tree with $\rad(T)= \lceil \frac{n-k+1}{2}\rceil$. If $n-k+1$ is odd, there is a path of length $n-k+1$ and therefore all $k-2$ edges which are not on this path need to be incident to one and the same vertex. Hence $T$ is a $C_{n,k}$.
If $n-k+1$ is even, the longest path of $T$ has length at least $n-k$. This means that there is one edge whose position is not important, neither to the radius nor to the condition $\Delta(T)=k$. Deleting that edge would result in a $C_{n-1,k}$, so the given tree is a $C_{n,k}^{\star}$.
\end{proof}

\begin{Exa} The trees in Fig.~\ref{fig:CometSameRadius} and Fig.~\ref{fig:noCometButSameRadius}
are both of order $n= 14$ and maximal degree $k=9$. They have both minimum radius but the tree in Fig.~\ref{fig:noCometButSameRadius} is not a $k$-comet.

\end{Exa}

The next statement concerns the status.  Lin et al.~presented 
this result in \cite{LinShangZhang11}, but again we want to
present an alternate, simpler proof using tree transformations.

\begin{The}(Lin et al.~2011, \cite{LinShangZhang11})\label{upperboundstatus}
Given a tree $T$ of order $n$  and maximum degree $\Delta(T)=k$,
its status is less than or equal to the status of a $k$-comet,  that is,
\begin{equation*}
\s(T) \leq \s(S_{n,k})
\end{equation*}
 with equality if and only if $T = S_{n,k}$.
\end{The}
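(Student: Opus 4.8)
The plan is to prove the maximality of $S_{n,k}$ by the same descent principle used for Theorem~\ref{lowerboundstatus}, but now driving the status \emph{upward}: I will show that every tree $T$ of order $n$ with $\Delta(T)=k$ and $T\neq S_{n,k}$ admits a single leaf relocation producing a tree $\T$ of the same order and maximum degree with $\s(\T)>\s(T)$. Since there are only finitely many trees of order $n$ and each move strictly increases the (bounded) status, no chain of moves can cycle; a maximiser therefore exists and admits no improving move. Establishing that $S_{n,k}$ is the only move-free tree then yields both $\s(T)\le \s(S_{n,k})$ and the uniqueness of the maximiser, i.e.\ the equality clause.

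The move is the mirror image of the one in Theorem~\ref{lowerboundstatus}: instead of pulling a distant leaf toward the centroid, I push a leaf lying near the centroid out to a farthest vertex. Concretely, let $x$ be a centroid of $T$ and let $u$ be a vertex with $\dist(x,u)=\ecc(x)=:r$, necessarily a leaf. I relocate a suitable leaf $b$ so as to make it pendant at $u$, choosing $b$ so that $\Delta(T\setminus\{b\})=k$ is preserved (a vertex of degree $k$ must survive the deletion) and noting that $u$ only reaches degree $2\le k$. When the centroid does not move, Proposition~\ref{transformationCentroid1}(a) applies and the status change is transparent: only the distance from $x$ to $b$ is affected, growing from $\dist(x,b)$ to $\dist(x,u)+1$, so that
\begin{align*}
  \s(\T)=\s(T)-\dist(x,b)+\dist(x,u)+1 > \s(T),
\end{align*}
the inequality being strict because $u$ is farthest, i.e.\ $\dist(x,b)\le \dist(x,u)=r$. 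This is the computation of Theorem~\ref{lowerboundstatus} with every sign reversed.

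To realise such a $b$ for \emph{every} non-broom $T$ I would split, as in Theorem~\ref{lowerboundstatus}, according to whether $\deg(x)=k$ or $\deg(x)<k$, using that $T\neq S_{n,k}$ forces ``excess branching'': a branching vertex off a longest path, or a degree-$k$ vertex not carrying $k-1$ pendant leaves. In each situation one locates a leaf $b$ near $x$ whose deletion preserves a degree-$k$ vertex, reattaches it at $u$, and tracks the centroid with Proposition~\ref{transformationCentroid1}. The main obstacle is exactly this bookkeeping, and it has two delicate points. First, degree preservation can fail if the only eligible short leaf sits at the unique degree-$k$ vertex; one must then relocate a leaf from elsewhere, or conclude that $T$ already equals $S_{n,k}$. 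Second, when the branch $T_1$ containing $u$ is so heavy that Proposition~\ref{transformationCentroid1}(b) applies, the centroid shifts to the neighbour $y$ of $x$ toward $u$, and the status becomes $\s(\T)=\s(T)+(\dist(x,u)-\dist(x,b))+(|S|+|T_2|-|T_1|-1)$ as in Theorem~\ref{lowerboundstatus}; here one must check that the positive first term dominates the non-positive weight term, which is where the choice of $b$ as a \emph{near} leaf, keeping $\dist(x,b)$ small and $T_2$ light, is essential. Once these cases are handled, the construction shows that the unique move-free tree is $S_{n,k}$, completing both the bound and its equality characterisation.
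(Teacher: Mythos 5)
Your overall framework---finite ascent by single leaf relocations, centroid tracking via Propositions~\ref{transformationCentroid1} and \ref{transformationCentroid2}, and the observation that a move-free tree must be the extremal one---is exactly the paper's, and your computation in the case where the centroid stays put is correct. The genuine gap is your insistence that the attachment target be a vertex $u$ with $\dist(x,u)=\ecc(x)$: with that restriction the strict increase you need can fail, and the ``bookkeeping'' you defer is precisely where it fails. Concretely, take $n=10$, $k=4$, and let $T$ have a vertex $x$ with three branches: a path $x\,a_1\,a_2\,a_3$ in which $a_1$ carries two extra pendant leaves $w_1,w_2$ (so $\deg(a_1)=4$), and two paths $x\,c_1\,c_2$ and $x\,e_1\,e_2$. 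Then $\Delta(T)=4$, $T\neq S_{10,4}$, and both $x$ and $a_1$ are centroid vertices (branch weight $5=n/2$). Working at the centroid $x$, the unique farthest vertex is $u=a_3$. The only leaves whose removal preserves $\Delta=4$ are $c_2$ and $e_2$ (removing $w_1$ or $w_2$ destroys the unique degree-$4$ vertex), both at distance $r-1=2$ from $x$, and for either one your own formula, with $|T_1|=6$, $|T_2|=|S|=3$, gives
\begin{align*}
\s(\T)-\s(T)=\bigl(\dist(x,a_3)-\dist(x,c_2)\bigr)+\bigl(|S|+|T_2|-|T_1|-1\bigr)=(3-2)+(3+3-6-1)=0,
\end{align*}
i.e., the centroid jumps to $a_1$ and the status does not increase at all. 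So for this tree and this legitimate choice of centroid your procedure has no improving move, the ascent stalls at a tree different from $S_{n,k}$, and no ``near'' leaf rescues it, because every usable leaf sits at distance $r-1$ and the heavy branch satisfies $2|T_1|=n+2$.

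The paper's proof avoids this precisely because its target is \emph{not} required to be a farthest vertex: it relocates one leaf $\overline{b}$ with $\dist(z,\overline{b})>1$ onto another such leaf $b$ with $\dist_T(x,b)\geq\dist_T(x,\overline{b})$, choosing the pair, whenever possible, so that $|T_b|<|T_{\overline{b}}|+|S|-1$ and the centroid stays at $x$. In the example above it attaches $e_2$ to $c_2$ (equidistant from $x$, neither farthest) and gains $+1$ by \eqref{xBleibtZentroid}. When no such pair exists the paper still has real work to do: it shows the distances must then be strictly ordered, disposes of the case of exactly two suitable leaves by the explicit computation $\s(\overline{T})-\s(T)=k-2>0$, and otherwise uses a third suitable leaf to force a distance gap of at least $2$, which dominates the $-1$ that the branch-weight term can contribute. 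None of this is in your plan; the sentence ``one must check that the positive first term dominates the non-positive weight term'' is exactly the point where, with your choice of target, the claim is false. To repair the argument you must either decouple the attachment target from the eccentricity condition (as the paper does) or prove that the centroid vertex and the farthest vertex can always be selected so that dominance holds---which amounts to redoing the paper's case analysis.
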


\begin{proof}
Since a path is a 2-comet we can assume $k>2$. Let $x$ be a
centroid vertex and let $z$ be a vertex with $\deg(z) = k$. If $T
\neq S_{n,k}$ then there exist two leaves $b$ and $\overline{b}$
with $\dist_T(z,b), \dist_T(z,\overline{b}) > 1$. Without loss of
generality let $\distT(x,b) \geq \distT(x,\overline{b})$ hold
during this proof.

Let $\overline{T}$ be the tree resulting from the removal of the
edge incident with $\overline{b}$ and inserting the edge
$(b,\overline{b})$ instead. Then $\overline{T}$ is again a tree of
order $n$ and maximum degree $k$.

If it is possible to choose $b$ and $\overline{b}$ such that $x$
is also a centroid vertex of $\overline{T}$, then the following
holds.

\begin{align}
 \s(\overline{T}) &= \s(x) = \sum_{u \in \overline{T}} \dist_{\overline{T}}(x,u) \notag\\
      &=\sum_{u \in T \setminus \{\overline{b}\}} \dist_T(x,u) + \dist_{\overline{T}}(x, \overline{b}) \label{xBleibtZentroid}\\
      &=\sum_{u \in T} \dist_T(x,u) - \dist_T(x,\overline{b}) + (\dist_T(x, b) + 1) \notag \\
      &> \s(T). \notag
\end{align}

Now we need to discuss how to choose $b$ and
$\overline{b}$, such that either $x$ is a centroid vertex of $\overline{T}$ or,
if this cannot be assured, we can guarantee $\s(T) >
\s(\overline{T})$ otherwise.

\begin{figure}[!ht]
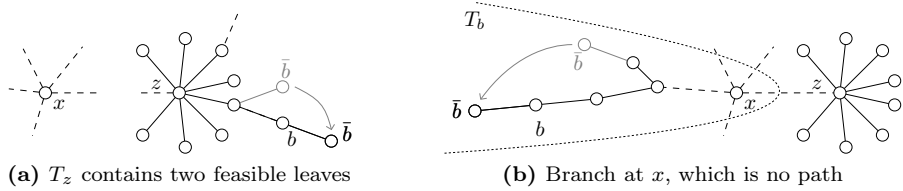

 \centering
 \subfloat[$T_z$ contains two feasible leaves]{
  \begin{pgfpicture}{3.2cm}{0.8cm}{8cm}{2.8cm}
   \begin{pgfmagnify}{0.8}{0.8}

\pgfnodecircle{Node1}[stroke]{\pgfxy(7,2)}{0.1cm}
\pgfputat{\pgfnodeborder{Node1}{1600}{0.3cm}}{\pgfbox[center,center]{$z$}}
\pgfnodecircle{Node2}[stroke]{\pgfxy(7.1,2.9)}{0.1cm}
\pgfnodecircle{Node3}[stroke]{\pgfxy( 7.7 ,2.7 )}{0.1cm}
\pgfnodecircle{Node4}[stroke]{\pgfxy(7.9,2.2)}{0.1cm}
\pgfnodecircle{Node5}[stroke]{\pgfxy(7.9,1.8)}{0.1cm}
\pgfnodecircle{Node6}[stroke]{\pgfxy(7.7,1.3)}{0.1cm}
\pgfnodecircle{Node7}[stroke]{\pgfxy(7.1,1.1)}{0.1cm}
\pgfnodecircle{Node8}[stroke]{\pgfxy(6.4,1.3)}{0.1cm}
\pgfnodecircle{Node9}[stroke]{\pgfxy(6.4,2.7)}{0.1cm}

\pgfnodecircle{Node22}[stroke]{\pgfxy(8.7,1.5)}{0.1cm}
\pgfputat{\pgfnodeborder{Node22}{300}{0.2cm}}{\pgfbox[center,center]{$b$}}

\textcolor{black}{
\pgfnodecircle{Node16}[stroke]{\pgfxy(9.5,1.2)}{0.1cm}
\pgfnodeconnline{Node22}{Node16}
\pgfputat{\pgfnodeborder{Node16}{30}{0.2cm}}{\pgfbox[center,center]{$\bar b$}}
\pgfnodecircle{Node16}[stroke]{\pgfxy(9.5,1.2)}{0.1cm}
\pgfnodeconnline{Node22}{Node16}
\pgfputat{\pgfnodeborder{Node16}{30}{0.2cm}}{\pgfbox[center,center]{$\bar b$}}
}
\textcolor{gray}{
\pgfnodecircle{Node23}[stroke]{\pgfxy(8.7,2.1)}{0.1cm}
\pgfnodeconnline{Node5}{Node23}
\pgfputat{\pgfnodeborder{Node23}{80}{0.2cm}}{\pgfbox[center,center]{$\bar b$}}
  \pgfsetendarrow{\pgfarrowpointed}
  \pgfxycurve(8.9,2.1)(9.3,1.9)(9.4,1.8)(9.5,1.4)
}
\pgfnodecircle{Node15}[stroke]{\pgfxy(4.8,2)}{0.1cm}
\pgfputat{\pgfnodeborder{Node15}{320}{0.2cm}}{\pgfbox[center,center]{$x$}}

\pgfnodecircle{Node17}[virtual]{\pgfxy(3.9,2.1)}{0.3cm}
\pgfnodecircle{Node18}[virtual]{\pgfxy(6,2)}{0.3cm}
\pgfnodecircle{Node19}[virtual]{\pgfxy(5.6,3)}{0.3cm}
\pgfnodecircle{Node20}[virtual]{\pgfxy(4.3,3)}{0.3cm}
\pgfnodecircle{Node21}[virtual]{\pgfxy(4.5,1)}{0.3cm}
\pgfnodecircle{Node24}[virtual]{\pgfxy(8,3.4)}{0.1cm}

\pgfnodeconnline{Node1}{Node2}
\pgfnodeconnline{Node1}{Node3}
\pgfnodeconnline{Node1}{Node4}
\pgfnodeconnline{Node1}{Node5}
\pgfnodeconnline{Node1}{Node6}
\pgfnodeconnline{Node1}{Node7}
\pgfnodeconnline{Node1}{Node8}
\pgfnodeconnline{Node1}{Node9}
\pgfnodeconnline{Node22}{Node5}

\pgfsetdash{{3pt}{3pt}}{0pt}
\pgfnodeconnline{Node15}{Node17}
\pgfnodeconnline{Node15}{Node18}
\pgfnodeconnline{Node1}{Node18}
\pgfnodeconnline{Node15}{Node19}
\pgfnodeconnline{Node15}{Node20}
\pgfnodeconnline{Node15}{Node21}
\pgfnodeconnline{Node22}{Node5}
\pgfnodeconnline{Node24}{Node3}

  \end{pgfmagnify}
  \end{pgfpicture}
\label{fig:SternTransformation_0}
}\hfill
\subfloat[Branch at $x$, which is no path]{
  \begin{pgfpicture}{0cm}{0.8cm}{6cm}{2.9cm}
   \begin{pgfmagnify}{0.8}{0.8}

\pgfnodecircle{Node1}[stroke]{\pgfxy(6.5,2)}{0.1cm}
\pgfputat{\pgfnodeborder{Node1}{1600}{0.3cm}}{\pgfbox[center,center]{$z$}}
\pgfnodecircle{Node2}[stroke]{\pgfxy(6.6,2.9)}{0.1cm}
\pgfnodecircle{Node3}[stroke]{\pgfxy( 7.2 ,2.7 )}{0.1cm}
\pgfnodecircle{Node4}[stroke]{\pgfxy(7.4,2.2)}{0.1cm}
\pgfnodecircle{Node5}[stroke]{\pgfxy(7.4,1.8)}{0.1cm}
\pgfnodecircle{Node6}[stroke]{\pgfxy(7.2,1.3)}{0.1cm}
\pgfnodecircle{Node7}[stroke]{\pgfxy(6.6,1.1)}{0.1cm}
\pgfnodecircle{Node8}[stroke]{\pgfxy(5.9,1.3)}{0.1cm}
\pgfnodecircle{Node9}[stroke]{\pgfxy(5.9,2.7)}{0.1cm}

\pgfnodecircle{Node10}[stroke]{\pgfxy(3.5,2.1)}{0.1cm}
\pgfnodecircle{Node11}[stroke]{\pgfxy(2.5,1.9)}{0.1cm}
\pgfnodecircle{Node13}[stroke]{\pgfxy(3.1,2.5)}{0.1cm}

\pgfnodecircle{Node12}[stroke]{\pgfxy(1.5,1.8)}{0.1cm}
\pgfputat{\pgfnodeborder{Node12}{280}{0.3cm}}{\pgfbox[center,center]{$b$}}

\textcolor{black}{
\pgfnodecircle{Node16}[stroke]{\pgfxy(0.5,1.7)}{0.1cm}
\pgfnodeconnline{Node12}{Node16}
\pgfputat{\pgfnodeborder{Node16}{175}{0.2cm}}{\pgfbox[center,center]{$\bar b$}}
\pgfnodecircle{Node16}[stroke]{\pgfxy(0.5,1.7)}{0.1cm}
\pgfnodeconnline{Node12}{Node16}
\pgfputat{\pgfnodeborder{Node16}{175}{0.2cm}}{\pgfbox[center,center]{$\bar b$}}
}

\textcolor{gray}{
\pgfnodecircle{Node14}[stroke]{\pgfxy(2.3,2.8)}{0.1cm}
\pgfnodeconnline{Node13}{Node14}
\pgfputat{\pgfnodeborder{Node14}{250}{0.2cm}}{\pgfbox[center,center]{$\bar b$}}
  \pgfsetendarrow{\pgfarrowpointed}
  \pgfxycurve(2.1,2.8)(1.7,2.8)(1.1,2.6)(0.6,1.9)
}
\pgfnodecircle{Node15}[stroke]{\pgfxy(4.8,2)}{0.1cm}
\pgfputat{\pgfnodeborder{Node15}{320}{0.2cm}}{\pgfbox[center,center]{$x$}}

\pgfnodecircle{Node17}[virtual]{\pgfxy(3.9,2.1)}{0.3cm}
\pgfnodecircle{Node18}[virtual]{\pgfxy(6,2)}{0.3cm}
\pgfnodecircle{Node19}[virtual]{\pgfxy(5.6,3)}{0.3cm}
\pgfnodecircle{Node20}[virtual]{\pgfxy(4.3,3)}{0.3cm}
\pgfnodecircle{Node21}[virtual]{\pgfxy(4.5,1)}{0.3cm}

\pgfnodeconnline{Node1}{Node2}
\pgfnodeconnline{Node1}{Node3}
\pgfnodeconnline{Node1}{Node4}
\pgfnodeconnline{Node1}{Node5}
\pgfnodeconnline{Node1}{Node6}
\pgfnodeconnline{Node1}{Node7}
\pgfnodeconnline{Node1}{Node8}
\pgfnodeconnline{Node1}{Node9}
\pgfnodeconnline{Node10}{Node11}
\pgfnodeconnline{Node12}{Node11}
\pgfnodeconnline{Node10}{Node13}

\pgfsetdash{{3pt}{3pt}}{0pt}
\pgfnodeconnline{Node15}{Node10}
\pgfnodeconnline{Node15}{Node1}
\pgfnodeconnline{Node15}{Node19}
\pgfnodeconnline{Node15}{Node20}
\pgfnodeconnline{Node15}{Node21}

\pgfputat{\pgfxy(0.5,3.2)}{\pgfbox[center,center]{$T_b$}}
\pgfsetdash{{1pt}{1pt}}{0pt}
\pgfxycurve(0,1)(7,1.5)(7.5,2.3)(0.5,3.5)

  \end{pgfmagnify}
  \end{pgfpicture}
\label{fig:SternTransformation_1}
}
\caption{$b$ and $\overline{b}$ can be chosen in same branch at $x$}

\end{figure}

We split into the two cases $x = z$ and $x \neq z$.
First, let $x \neq z$. Let $T_z$ be the branch
at $x$ containing $z$. If $T_z$ contains two leaves with distance
to $z$ greater than one, then choose $b$ and $\overline{b}$ in
$T_z$ (see Fig.~\ref{fig:SternTransformation_0}). According to
Proposition~\ref{transformationCentroid1} we know that $x$ is a
centroid vertex of $\overline{T}$ and together with
\eqref{xBleibtZentroid} follows $\s(T)<\s(\overline{T})$.

If there exists a branch $T_1 \neq T_z$ which is not a path, we
can choose $b$ and $\overline{b}$ inside this branch (see
Fig.~\ref{fig:SternTransformation_1}). According to Proposition
\ref{transformationCentroid1} $x$ is a centroid vertex of
$\overline{T}$ and therefore we again conclude with
\eqref{xBleibtZentroid} that $\s(T)<\s(\overline{T})$.

So let us assume all branches not equal to $T_z$ to be paths and
let $T_z$ contain at most one leaf with distance to $z$ greater
than one. 
Consider $T_b$ the branch containing
$b$, $T_{\overline{b}}$ the branch containing $\overline{b}$ and
$S = T \setminus (T_b \cup T_{\overline b}) \cup \{x\}$. Obviously
$T_b \neq T_{\overline b}$.

\noindent If there exists a choice of $b$ and $\overline{b}$ such that $|T_b| < |T_{\overline{b}}| + |S| - 1$ holds, then
$x$ is a centroid vertex of $\overline T$ according to Proposition
\ref{transformationCentroid1}. With \eqref{xBleibtZentroid} we get
$\s(T)<\s(\overline T)$.

\noindent Therefore, let us consider the case $|T_b| \geq
|T_{\overline{b}}| + |S| - 1$ for every possible choice of $b$ and $\overline{b}$. 
First of all, this assumption implies $\distT(x,b)>\distT(x,\overline{b})$. Assume equality holds. 
Then, we derive from our assumption (by exchanging the roles of $b$ and $\overline{b}$)  
$|T_{\overline{b}}| = |T_b|$ and $|S|=1$. Since one of the branches is a path and the leaves $b$ and $\overline{b}$ 
have the same distance to $x$, both branches must be paths. But these are the only branches at $x$, so we get a 
contradiction to $\Delta(T)=k>2$.

Proposition~\ref{transformationCentroid1} states that under the actual assumptions on the 
choice of $b$ and $\overline{b}$ the neighbor $y$ of
$x$ on the shortest path from $x$ to $b$ is a centroid vertex.
Thus we get the following.
\begin{align}
\s(\overline{T}) &= \s(y) = \sum_{u \in T} \dist_{\overline{T}}(y,u) \notag\\
       &= \sum_{u \in T_{b} \setminus \{x\}} \dist_{\overline{T}}(y,u) + \dist_{\overline{T}}(y,\overline{b})+ \sum_{u \in S \cup T_{\overline{b}} \setminus \{\overline{b}\} } \dist_{\overline{T}}(y,u) \label{eq:diff}\\
       &= \sum_{u \in T} \dist_T(x,u) - \dist_T(x,\overline{b}) - |T_b| + 1 + \dist_T(x,b) + |S| + |T_{\overline{b}}| - 2 \notag\\
       &= \s(T) + (\dist_T(x,b)- \dist_T(x,\overline{b})  ) + (|S| + |T_{\overline{b}}| - |T_{b}| - 1)\notag.
\end{align}

To get the desired result, that is, $\s(\bar T) > \s(T)$, we can use a lot of assumptions on $T$ we have at this point of the proof. 
We know that since $T$ is not a comet, there exist at least two leaves of $T$ whose distance to $z$ is greater than or equal to 1. If there are exactly two leaves $b$ and $\overline{b}$ with this property, then there are also exactly two branches at $x$, that is $|S|=1$ and one branch at $x$ is path and the other is $T_z$. Since inequality \eqref{eq:centroidInequs}  (and  $\distT(x,b)>\distT(x,\overline{b})$) must hold, $b$ is the unique leaf in the branch which is a path and $\overline{b}$ is the unique leaf in $T_z$ which has distance to $z$ greater than 1. Therefore 
$\distT(x,b) = |T_b|-1$ and $\distT(x,\overline{b}) = |T_{\overline{b}}|-(k-1)$ (and $|S|=1$) and we get (using equations \eqref{eq:diff})
\begin{align*}
 \s(\overline{T}) - \s(T) = k-2 > 0
\end{align*}

Now assume there exist more than two leaves which are suitable for the roles of $b$ and $\overline{b}$. As we observed above for every choice of $b$ and $\overline{b}$, $\distT(x,b)>\distT(x,\overline{b})$ holds, hence a third (suitable) leaf $\hat  b$ satisfies $\distT(x,\hat b) \neq \distT(x,b)$ and $\distT(x,\hat b)\neq \distT(x,\overline{b})$. Therefore we can choose the vertices $b$ and $\overline{b}$ with the property $\distT(x,b) > \distT(x,\overline{b}) + 1$. Using again the equalities \eqref{eq:diff} we get 
\begin{align*}
 \s(\overline{T}) - \s(T) > 0
\end{align*}
and the case $x \neq z$ is complete.

Let now $z=x$, that is, $\deg(x)=k$. In this case there is no branch at $x$ with a special role, 
we do not need to distinguish the branches at $x$. 
With the same arguments as in the first case if either one branch is not a path or if there exists 
a choice of $b$ and $\overline{b}$ such that (with the notation from above) 
$|T_b|< |S| +|T_{\overline{b}}| - 1$, $x$ is a centroid vertex of the transformed tree and we 
are done due to \eqref{xBleibtZentroid}.

Otherwise, we can use the equalities in \eqref{eq:diff}. Every branch at $x$ is a path, that means $\distT(x,b) = |T_b| - 1$ and $\distT(x,\overline{b}) = |T_{\overline{b}}| - 1$ and, since $k>2$ we know that $|S|>1$. Altogether we get
\begin{align*}
\s(\overline{T}) - \s(T)&=(|S| + |T_{\overline{b}}|- |T_b|-1) + (\dist_T(x,b) - \dist_T(x,\overline{b})) 
\\
&= |S|-1 > 0
\end{align*}
and the proof is complete.
\end{proof}

\section{Bounds for general graphs}
\label{sec:generalgraphs}

Let $G$ be an arbitrary undirected, simple, connected  graph. Let
$T$ be an arbitrary spanning tree of $G$. Then $\dist_G(x,y) \leq
\dist_T(x,y)$ holds for all vertices $x,y$ and therefore
 $\rad(G) \leq \rad(T)$ and $\s(G) \leq \s(T)$. By choosing the spanning 
 tree of the same maximum degree as $G$ we get the upper bound.
 
 On the other hand,
$G$ contains spanning trees $T_1$, $T_2$ such that $\rad(G) =
\rad(T_1)$ and $\s(G) = \s(T_2)$ hold. Thus the radius and status
of $G$ is bounded by the radius and status of certain spanning
trees of $G$. 
However, these spanning trees do not need to have the same maximum degree as $G$. 
But the following lemma holds. Note that the assertion on the status can be directly derived 
from a lemma from Lin et al.~in \cite{LinShangZhang11}.

\begin{Lem}
 Let $2\leq \ell \leq k\leq n$. Then 
 \begin{align*}
  \s(B_{n,k}) \leq \s(B_{n,\ell}) 
 \end{align*}
 with equality if and only if $k= \ell$ and
 \begin{align*}
  \rad(B_{n,k}) \leq \rad(B_{n,\ell}).
 \end{align*}
\end{Lem}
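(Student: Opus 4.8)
The plan is to prove both inequalities by reducing to the case of consecutive degrees and then invoking the minimality of balanced trees established in Theorems~\ref{lowerboundexc} and~\ref{lowerboundstatus}. Since $\s(B_{n,k})$ and $\rad(B_{n,k})$ depend only on $n$ and $k$, and since a tree on $n$ vertices has a vertex of degree $k$ only for $k \le n-1$, I may assume $\ell \le k \le n-1$ (the case $k=n$ is vacuous). By telescoping it then suffices to show, for every $j$ with $\ell \le j \le k-1$, that
\begin{align*}
 \rad(B_{n,j+1}) \le \rad(B_{n,j}) \quad\text{and}\quad \s(B_{n,j+1}) < \s(B_{n,j}).
\end{align*}
Chaining these yields $\rad(B_{n,k}) \le \rad(B_{n,\ell})$ and $\s(B_{n,k}) \le \s(B_{n,\ell})$, and the strictness of each status step forces equality exactly when $k=\ell$.

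For the status step I would start from $B_{n,j}$ and let $x$ be a centroid vertex. Because $j \le n-2$, the tree is not a star, so $r := \ecc(x) \ge 2$; by the definition of a $j$-balanced tree every vertex at distance at most $r-2$ from $x$ has degree $j$, so in particular $\deg(x)=j$, and an eccentric vertex supplies a leaf $b$ with $\dist(x,b) = r > 1$. Reattaching $b$ to $x$ produces a tree $T$ of order $n$ in which $\deg(x)=j+1$, and since $b$ was the only vertex whose incident edges changed, $\Delta(T)=j+1$. By Proposition~\ref{transformationCentroid2}, $x$ remains a centroid vertex of $T$, and therefore
\begin{align*}
 \s(T) = \s(B_{n,j}) - \dist(x,b) + 1 < \s(B_{n,j}),
\end{align*}
using $\dist(x,b) > 1$. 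Theorem~\ref{lowerboundstatus} then gives $\s(B_{n,j+1}) \le \s(T) < \s(B_{n,j})$, as required.

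The radius step is entirely analogous. This time take $x$ to be a central vertex of $B_{n,j}$, so that $r := \rad(B_{n,j}) = \ecc(x) \ge 2$ and (again by the balanced structure) $\deg(x)=j$, and choose an eccentric leaf $b$ at distance $r$ from $x$. Reattaching $b$ to $x$ yields a tree $T$ with $\Delta(T)=j+1$. Every vertex $w \neq b$ keeps its distance to $x$, since the modification only touches $b$, while $\dist_T(x,b)=1$; hence $\ecc_T(x) \le r$ and thus $\rad(T) \le \ecc_T(x) \le r = \rad(B_{n,j})$. Applying Theorem~\ref{lowerboundexc} gives $\rad(B_{n,j+1}) \le \rad(T) \le \rad(B_{n,j})$, completing the step.

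I expect the main obstacle to be the bookkeeping around the chosen vertex $x$ rather than any deep difficulty: one must confirm that in $B_{n,j}$ (for $j \le n-2$) a central vertex, respectively a centroid vertex, can be taken to lie in the fully saturated inner region and hence to have maximum degree $j$, so that attaching a single leaf raises the maximum degree to exactly $j+1$, and that a leaf at distance greater than one from $x$ really exists (equivalently, that $B_{n,j}$ is not a star). Both facts follow from $\rad(B_{n,j}) \ge 2$, which holds precisely because $j \le n-2$; the degenerate star case $j=n-1$ never enters the telescoping range $j \le k-1 \le n-2$.
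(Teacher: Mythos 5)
Your high-level plan --- move a distant leaf onto a centroid (resp.\ central) vertex, invoke Proposition~\ref{transformationCentroid2}, and finish with Theorems~\ref{lowerboundstatus} and~\ref{lowerboundexc} --- is exactly the paper's plan, and telescoping over consecutive degrees is a harmless repackaging of the paper's iteration. The genuine gap is the claim on which every step of your telescope rests: that the chosen centroid (resp.\ central) vertex $x$ of $B_{n,j}$ satisfies $\deg(x)=j$, so that a single reattachment yields a tree of maximum degree exactly $j+1$. The definition of a $j$-balanced tree only asserts that \emph{there exists some} vertex $x_0$ such that all vertices within distance $\ecc(x_0)-2$ of $x_0$ have degree $j$; it says nothing about an arbitrarily chosen centroid or central vertex, so you cannot read off $\deg(x)=j$ ``in particular'' from the definition. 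Worse, for central vertices the claim is simply false: take $n=6$, $j=4$, and let $B_{6,4}$ be the star with hub $x_0$ and leaves $a,b,c,d$, plus one extra vertex $a_1$ attached to $a$. This tree is $4$-balanced (with defining vertex $x_0$, the condition constrains only $x_0$ itself), its center is $\{x_0,a\}$, and $\deg(a)=2$. Choosing $x=a$ and an eccentric leaf, say $b$, your transformation produces a tree of maximum degree $3$, not $j+1=5$, so Theorem~\ref{lowerboundexc} cannot be applied with parameter $j+1$ and the radius step collapses. This also refutes your closing remark that the needed facts ``follow from $\rad(B_{n,j})\geq 2$'': here $\rad(B_{6,4})=2$, yet the central vertex $a$ fails the property, so at best one must prove that \emph{some suitable} central vertex exists. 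The status step suffers from the same quantifier error; there the needed conclusion (every centroid vertex of $B_{n,j}$ has degree $j$ when $j\leq n-2$) happens to be true, but it requires a structural argument you do not give, e.g.\ showing that for $\ecc(x_0)\geq 3$ every centroid vertex lies within distance $\ecc(x_0)-2$ of $x_0$, and treating $\ecc(x_0)=2$ separately.

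Note how the paper's proof avoids this issue entirely: it makes no claim about the degree of the attachment vertex. Starting from $B_{n,\ell}$, it repeatedly moves a farthest leaf onto a centroid (resp.\ central) vertex of the \emph{current} tree; each move keeps the maximum degree at most $k$, strictly decreases the status (resp.\ does not increase the radius), and the iteration stops precisely when the maximum degree reaches $k$ --- only then are Theorems~\ref{lowerboundstatus} and~\ref{lowerboundexc} invoked. Your argument can be repaired in the same spirit (iterate within each degree step until the maximum degree really equals $j+1$, since the attachment vertex gains one degree per move), or by proving separately that the defining vertex $x_0$ is always a central vertex of degree $j$ and that every centroid vertex of $B_{n,j}$ has degree $j$; but some such argument must be supplied, and as written the proposal is incomplete at its central point.
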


\begin{proof}
 To prove these statements we can use again the transformation introduced in Section \ref{sec:transformation}. Let $T$ be a tree with $\Delta(T) = \ell$. We will show that if $\ell < k$ we can transform $T$ into a tree $\overline{T}$ whose maximum degree is still less than or equal to $k$ and its status (resp. radius) is less than (or equal to) the corresponding value of $T$. Start with $T= B_{n,\ell}$ and iterate until the maximum degree of the resulting tree is equal to $k$. The assertion then follows from Theorem \ref{lowerboundstatus} (resp. \ref{lowerboundexc}), that is
 \begin{align*}
  \s(B_{n,\ell})>&\s(\overline T)\geq \s(B_{n,k}) \text{ and }\\
  \rad(B_{n,\ell})\geq&\rad(\overline T)\geq \rad(B_{n,k})
 \end{align*}

 First we consider the status, let $x$ be a centroid vertex and $b$ a leaf of $T$ with $\dist(x,b) = \ecc(x)$ (since $k>\ell$ we know that $\ecc(x) >1$). Let $\overline T$ be the  tree resulting from the removal of the edge connecting $b$ and insertion of $(x,b)$. According to Proposition \ref{transformationCentroid2}, $x$ is centroid vertex of $T$. Further $\Delta(\overline T)\leq k$.
  Similar to the calculations in \eqref{xBleibtZentroid} we get $\s(\overline T) < \s(T)$.

 Now let us assume that $x$ is a central vertex and transform $T$ in the same manner as above. Clearly the radius does not increase during the transformation. This completes the proof.

\end{proof}

These observations  lead to the following two theorems. Note that
the second theorem has already been stated in
\cite{LinShangZhang11} and is presented here to demonstrate the connection between 
radius and status explicitly.

\begin{The}
Let $G = (V,E)$ be an undirected simple, connected graph  with $n$
vertices and maximum degree $\Delta(G)=k$. Then
\begin{align*}
 \rad(B_{n,k}) \leq \rad(G) \leq \rad(S_{n,k}).
\end{align*}
In particular we get
\begin{enumerate}
 \item[(i)] If $G$ contains a $k$-balanced tree $B_{n,k}$, then $\rad(G) = \rad(B_{n,k})$.
 \item[(ii)] If $\rad(G) = \rad(S_{n,k})$, then $G$ contains a
tree $C^{\star}_{n,k}$.
\end{enumerate}

\end{The}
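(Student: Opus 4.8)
The plan is to combine the two spanning-tree observations recorded just before the theorem with the extremal tree bounds of Theorems~\ref{lowerboundexc} and~\ref{upperboundexc} and the preceding Lemma. Everything rests on the single estimate $\dist_G(x,y)\le\dist_T(x,y)$, valid for every spanning tree $T$ of $G$, which already gives $\rad(G)\le\rad(T)$; the work is only in choosing the right spanning trees.

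For the upper bound I would first produce a spanning tree $T$ with $\Delta(T)=k$. Fixing a vertex $v$ of degree $k$, I take all $k$ edges incident to $v$ (a star, hence acyclic) and extend this forest to a spanning tree. Since $T$ is a subgraph of $G$ we have $\Delta(T)\le k$, while $v$ retains degree $k$, so $\Delta(T)=k$. Then $\rad(G)\le\rad(T)$, and Theorem~\ref{upperboundexc} gives $\rad(T)\le\lceil(n-k+1)/2\rceil=\rad(S_{n,k})$, yielding $\rad(G)\le\rad(S_{n,k})$.

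For the lower bound I would use the spanning tree $T_1$ with $\rad(G)=\rad(T_1)$ furnished by the remarks above (a breadth-first tree rooted at a central vertex of $G$ realises it). Writing $\ell=\Delta(T_1)$, the tree $T_1$ is a subgraph of $G$ on $n\ge 3$ vertices, so $2\le\ell\le k$. Theorem~\ref{lowerboundexc} then gives $\rad(B_{n,\ell})\le\rad(T_1)$, and the preceding Lemma gives $\rad(B_{n,k})\le\rad(B_{n,\ell})$; chaining these yields $\rad(B_{n,k})\le\rad(T_1)=\rad(G)$, which completes the two-sided bound. For refinement~(i), a $k$-balanced tree $B_{n,k}$ contained in $G$ is a spanning tree of order $n$, so $\rad(G)\le\rad(B_{n,k})$; together with the lower bound this forces $\rad(G)=\rad(B_{n,k})$.

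For refinement~(ii), the equality $\rad(G)=\rad(S_{n,k})$ collapses the chain $\rad(G)\le\rad(T)\le\rad(S_{n,k})$ from the upper-bound step, so $T$ is a tree of order $n$ and maximum degree $k$ attaining $\lceil(n-k+1)/2\rceil$. By Theorem~\ref{upperboundexc}, $T=C^{\star}_{n,k}$ when $n-k+1$ is even and $T=C_{n,k}$ when it is odd. The step I would stress, and the one needing a short verification, is that a $C_{n,k}$ already \emph{is} a $C^{\star}_{n,k}$: deleting an endpoint of its longest path leaves a $C_{n-1,k}$, so $C_{n,k}$ contains $C_{n-1,k}$ as a subtree. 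Hence in either parity $G$ contains a $C^{\star}_{n,k}$. I expect no genuinely hard point here, only the bookkeeping of ensuring the chosen spanning trees have the required maximum degree ($=k$ above, $\le k$ below) and reading the extremal trees off Theorem~\ref{upperboundexc} correctly, including this last observation.
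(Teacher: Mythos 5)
Your proposal is correct and follows essentially the same route the paper intends: the paper gives no explicit proof of this theorem, deriving it from the spanning-tree observations (distances only grow in a spanning tree, existence of a degree-$k$ spanning tree for the upper bound, a radius-preserving spanning tree for the lower bound) combined with Theorems~\ref{lowerboundexc} and~\ref{upperboundexc} and the preceding Lemma, which is exactly your chain of arguments. Your explicit construction of the degree-$k$ spanning tree and the verification that a $C_{n,k}$ is itself a $C^{\star}_{n,k}$ (needed so that statement~(ii) can be phrased with $C^{\star}_{n,k}$ alone in both parities) merely fill in details the paper leaves implicit.
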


\begin{The}(Lin et al.~2011, \cite{LinShangZhang11})
Let $G = (V,E)$ be an undirected simple, connected graph  with $n$
vertices and maximum degree $\Delta(G)=k$. Then
\begin{align*}
 \s(B_{n,k}) \leq \s(G) \leq \s(S_{n,k}).
\end{align*}
In particular we get
\begin{enumerate}
 \item[(i)] $\s(G) = \s(B_{n,k})$ if and only if $G$ contains a $k$-balanced tree $B_{n,k}$.
 \item[(ii)] If $\s(G) = \s(S_{n,k})$ then $G$ contains a comet $S_{n,k}$.
\end{enumerate}

\end{The}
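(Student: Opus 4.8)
The plan is to mirror the proof of the corresponding radius theorem and reduce everything to the two tree results, Theorem~\ref{lowerboundstatus} and Theorem~\ref{upperboundstatus}, together with the monotonicity Lemma proved just above. The two observations preceding the statement supply the scaffolding: for \emph{any} spanning tree $T$ of $G$ one has $\dist_G(x,y)\le\dist_T(x,y)$ and hence $\s(G)\le\s(T)$, while $G$ always admits a spanning tree realizing its status exactly. I would make both of these precise before addressing the equality cases.

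For the \textbf{upper bound}, I would first pick a vertex $v$ of degree $k$ and build a spanning tree $T$ containing all $k$ edges incident with $v$ (the star at $v$ is acyclic, so it extends to a spanning tree); since $\Delta(T)\le\Delta(G)=k$ this forces $\Delta(T)=k$. Then $\s(G)\le\s(T)\le\s(S_{n,k})$ by Theorem~\ref{upperboundstatus}. For part~(ii), if $\s(G)=\s(S_{n,k})$ the chain collapses, so in particular $\s(T)=\s(S_{n,k})$; the equality clause of Theorem~\ref{upperboundstatus} then yields $T=S_{n,k}$, i.e.\ $G$ contains a comet $S_{n,k}$. Only this direction is claimed, since extra edges of $G$ can strictly shorten distances, so merely containing an $S_{n,k}$ need not force equality.

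For the \textbf{lower bound}, I would take a median vertex $x$ of $G$ and root a BFS tree $T_2$ at $x$; BFS preserves distances from the root, so the status of $x$ computed in $T_2$ equals its status in $G$, whence $\s(T_2)=\s(G)$. Writing $\ell=\Delta(T_2)\le k$, Theorem~\ref{lowerboundstatus} gives $\s(T_2)\ge\s(B_{n,\ell})$ and the Lemma gives $\s(B_{n,\ell})\ge\s(B_{n,k})$, so
\[
 \s(B_{n,k})\le\s(B_{n,\ell})\le\s(T_2)=\s(G).
\]
For part~(i), the direction $(\Leftarrow)$ is immediate: a contained $B_{n,k}$ is a spanning tree of maximum degree $k$, so $\s(G)\le\s(B_{n,k})$, which together with the lower bound gives equality. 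For $(\Rightarrow)$, equality $\s(G)=\s(B_{n,k})$ forces the whole displayed chain to be tight: $\s(B_{n,\ell})=\s(B_{n,k})$ forces $\ell=k$ by the equality clause of the Lemma, and $\s(T_2)=\s(B_{n,\ell})=\s(B_{n,k})$ forces $T_2=B_{n,k}$ by the equality clause of Theorem~\ref{lowerboundstatus}; hence $G$ contains the $k$-balanced tree $B_{n,k}$.

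The routine inequalities are immediate from the cited results; the step requiring care is the lower-bound equality $(\Rightarrow)$, where I must track the \emph{possibly smaller} maximum degree $\ell$ of the status-realizing spanning tree and invoke \emph{both} equality clauses—that of the Lemma to pin down $\ell=k$, and that of Theorem~\ref{lowerboundstatus} to identify $T_2$—in the correct order. I would also verify the two existence claims on which the bounds rest: that the BFS tree genuinely realizes $\s(G)$ (distance preservation from the root is exactly what the status of the root requires), and that a spanning tree of maximum degree $k$ always exists for the upper bound.
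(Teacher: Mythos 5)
Your proposal is correct and follows essentially the same route as the paper, which leaves this theorem's proof implicit in the observations opening Section~\ref{sec:generalgraphs}: any spanning tree over-estimates status, a status-realizing spanning tree exists but may have smaller maximum degree $\ell$, and the Lemma's monotonicity with its equality clause bridges $\ell$ back to $k$, after which the equality clauses of Theorems~\ref{lowerboundstatus} and~\ref{upperboundstatus} give (i) and (ii). Your only additions are the explicit constructions the paper does not spell out (extending the star at a degree-$k$ vertex to a spanning tree, and taking a BFS tree rooted at a median to realize $\s(G)$), both of which are exactly the standard realizations of the spanning trees the paper invokes.
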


\section{Conclusions and outlook}
\label{sec:concl}

The results of this work present sharp lower and upper bounds on the radius and status of an undirected, connected, unweighted graph $G$. As a connected graph, $G$ contains spanning trees of the same order but not necessarily of the same maximum degree. The distance of two vertices in $G$ is less than or equal to the distance of the two in each spanning tree of $G$. Therefore both radius and status of $G$ are less than or equal to the radius and status, resp., of each spanning tree of $G$. On the other hand there exists a spanning tree $T$ with the same distances as in 	$G$ and therefore the radius and status of $G$ equals to the radius and status, resp., of $T$. But this tree $T$ might have a smaller maximum degree than $G$. However, the higher the maximum degree, the lower the values of radius and status.
To give lower and upper bounds for radius and status of $G$ with respect to its order and maximum degree, it suffices to investigate the two functions on trees with the same order and maximum degree. 

Although Lin et al.~published the bounds on the status and Vizing pointed out the upper bound of the radius, this work specifically demonstrates the similarities of the extremal behavior of status and radius on graphs. Both functions take their minimum and maximum on the same type of tree graphs. Further the results were obtained by a new proof technique which regains the known results and proves the new ones in a simple and elegant manner.

So far, the edges of the graphs considered in this work had all length one. As a next step it is natural to investigate bounds of the radius and the status if it is allowed to assign positive length to the edges, that is, a function $\ell:E\longrightarrow \mathbb{R}_{>0}$. The distance $\dist(x,y)$ of two vertices $x$ and $y$ is length of a shortest path from $x$ to $y$, as usual. The definitions of status and radius remain the same, only the notion of distance becomes more general. The question is whether we can give sharp bounds and if the extremal behavior of status and radius remains to be similar. Depending on the context there might be restrictions on the distance of certain vertices to each other which need to be considered additionally. It is not clear if the transformation described in Section \ref{sec:transformation} can be adapted to this case. A new approach may be required.

\section*{Acknowledgement}

We thank three anonymous colleagues for their constructive remarks on an earlier version of this paper.

\bibliographystyle{abbrv}
\bibliography{bibliography}

\end{document}